\documentclass{imsart}
\usepackage[margin=1.75in]{geometry}

\usepackage{fancyhdr}
\usepackage{stackengine}
\usepackage{microtype}
\usepackage[textsize=footnotesize]{todonotes}
\usepackage{xcolor}
\usepackage{hyperref}

\usepackage{mathrsfs}
\usepackage{graphicx}
\usepackage{amsmath}        
\usepackage{amssymb}
\usepackage{amsthm}
\usepackage{bm}
\usepackage[mathscr]{euscript}
   
        \usepackage[countmax]{subfloat}  
\usepackage{fourier}
\usepackage{xcolor}
\usepackage{mathtools}
\usepackage{enumerate}
\usepackage{enumitem}
\counterwithin{figure}{section}
\usepackage[numbers]{natbib}
\usepackage{url}
\usepackage{verbatim}
\usepackage{booktabs}
\usepackage{placeins}
\usepackage{multirow}
\usepackage{subcaption}
\usepackage[cmtip,all]{xy}
\usepackage{aliascnt}
\usepackage{ifthen}
\hypersetup
{
    pdfauthor={Anne van Delft},
    pdfkeywords={Geometric and measure-theoretic characterizations of the measure-determining property of balls in metric spaces}
}
\newcounter{alphcount}
{\begin{list}{{\upshape(}\alph{alphcount}\/{\upshape)\ }}%
             {\usecounter{alphcount}\labelwidth1.5em%
              \bigmargin2em\labelsep0.5em\topsep0.25em plus 0.5ex%
              \itemsep0.25em plus 0.5ex\parsep0em}}{\end{list}}
{\begin{list}{{\upshape(#1\arabic{alphcount})\hfill}}%
             {\usecounter{alphcount}\labelwidth2.5em%
              \bigmargin2.5em\labelsep0em\topsep0.25em plus 0.5ex%
              \itemsep0.25em plus 0.5ex\parsep0em}}{\end{list}}
\newcommand\tageq{\addtocounter{equation}{1}\tag{\theequation}}

\newcommand{\mynewtheorem}[2]{
  \newaliascnt{#1}{dummy}
  \newtheorem{#1}[#1]{#2}
  \aliascntresetthe{#1}
  \expandafter\def\csname #1autorefname\endcsname{#2}
}
 
\theoremstyle{plain}
  \mynewtheorem{thm}{Theorem}
  \mynewtheorem{Proposition}{Proposition}
  \mynewtheorem{Corollary}{Corollary}
  \mynewtheorem{assumption}{Assumption}
  \mynewtheorem{lemma}{Lemma}
\theoremstyle{definition}
  \mynewtheorem{definition}{Definition}
  \mynewtheorem{example}{Example}
\theoremstyle{definition}
  \mynewtheorem{Remark}{Remark}

\newcommand{\mb}[1]{\mathbb{#1}}
\newcommand{\mc}[1]{\mathcal{#1}}

\newcommand{\rnum}{\mathbb{R}}

\newcommand{\flo}[1]{\lfloor#1\rfloor}
\newcommand{\F}{\mathcal{F}}

\allowdisplaybreaks

\newcommand{\longsquiggly}{\xymatrix@C=1.5em{{}\ar@{~>}[r]&{}}}
\newcommand{\medsquiggly}{\xymatrix@C=1.2em{{}\ar@{~>}[r]&{}}}

\newcounter{relctr} 
\everydisplay\expandafter{\the\everydisplay\setcounter{relctr}{0}} 

\AtBeginDocument{}

\newboolean{withproof}
\setboolean{withproof}{true}

\begin{document}

\begin{frontmatter}
    
\title{
Geometric and measure-theoretic characterizations of the measure-determining property of balls in metric spaces
}
\maketitle

\begin{aug}
\author{\fnms{Anne} \snm{van Delft}\ead[label=e1,mark]{anne.vandelft@columbia.edu}}

\address[A]{Department of Statistics, Columbia University, 1255 Amsterdam Avenue, New York, NY 10027, USA.\\ \printead{e1}}
\end{aug}

\begin{abstract}
The question of what structural properties drive the collection of open or closed balls to uniquely determine Borel probability measures on a metric space---the \textit{ball} property---has a long history in measure theory. While various sufficient conditions have been known since the 1970s, and it was shown by Preiss and Ti\v{s}er (1991) that it holds for all separable Banach spaces, Davies (1971) constructed a famous counterexample that shows it can fail on compact metric spaces. It was only recently established in van Delft (2026) that metric spaces admitting an isometric Hilbertian embedding must possess the ball property. Building on the functional-analytic and geometric foundations in earlier work, this article provides a comprehensive structural and measure-theoretic characterization of the ball property. First, we establish a necessary and sufficient condition for the ball property to fail: the collection of balls must possess sufficient combinatorial capacity, which we define as \textit{charged dyadic shattering}. Second, by characterizing the ball property via $\lambda$-systems, we place classical vector space results within a unified context. Building on this to understand spaces that have the topological capacity but lack linear or geometric rigidity required to force ball-annihilating measures to zero, we relate the metric outer measure to the cylinder outer measure on countable products of Polish spaces, deriving necessary and sufficient measure-theoretic conditions for the ball property to hold. This allows us to derive a structural taxonomy on the decoupling of the metric outer measure from the cylinder outer measure. This is subsequently used to deconstruct Davies' counterexample. Finally, we introduce a minimal   generalization of the ball property---the \textit{$n$th order ball property}---and show that such classes  can fully recover measure determination in highly pathological constructions like Davies' counterexample. 
\end{abstract}

\begin{keyword}[class=MSC]
\kwdgroup[type=primary]{\kwd{28A12}\kwd{28A75}
\kwd{60B05}}
\kwdgroup[type=secondary]{\kwd{54E35}\kwd{28C15}}
\end{keyword}

\begin{keyword}
\kwd{geometric measure theory}
\kwd{metric geometry}
\kwd{determination of measures}
\kwd{combinatorial capacity}
\kwd{metric outer measure}
\end{keyword}

\end{frontmatter}

\section{Introduction}\label{sec1}

Uniquely identifying a measure via a subcollection of sets is a foundational problem in measure theory dating back to the work of Carath{\'e}odory \cite{Car1914}, Sierpi{\'n}ski \cite{Sie1928} and Dynkin \cite{Dynkin}. Historically, literature relied heavily on vector space structure and duality to find such suitable collections \citep[see e.g.,][]{Levy1925,CramerWold1936,kolmogorov1933,Mourier,Prokhorov1956}.

With the highly influential works of Parthasarathy \cite{Part67} and Billingsley \cite{bil68} in the late 1960s, the probability community became interested in finding suitable sets on general metric spaces to uniquely determine measures. The most natural candidate for a purely geometric separating class of sets is the collection of open (or closed) balls. Unlike in $\rnum$, the collection of balls fails to form a $\pi$-system.  This led to two deeply related questions raised by Tops{\o}e and Goffman  \citep[see][]{davies1971}. Tops{\o}e asked whether balls uniquely determine finite Borel measures; that is, if $\mu(\bar{B})=\nu(\bar{B})$ for all closed balls $\bar{B} \subseteq S$, does this imply that $\mu= \nu$ on the Borel $\sigma$-algebra $\mc{B}(S)$? We will henceforth refer to this property as \textit{the ball property}. Concurrently, Goffman posed the question of whether measures can be approximated up to arbitrary precision by finite disjoint collections of small balls.
Because strong covering properties \cite{Besicovitch45} imply measure determination, these problems are closely linked; indeed, measure determination holds for separable metric spaces that are directionally limited in the sense of Federer \cite{federer1969} \citep[see e.g.,][]{buet2016}. Anderson (unpublished manuscript; see e.g.,\cite{hoffmann1975}) proved that spaces with finite doubling dimension possess the ball property. Christensen \cite{christensen1980} showed that if there exists a uniform measure on the metric space, the collection of open balls forms a separating class. After this, positive results were obtained  for large classes of Banach spaces \citep[see e.g.,][]{hoffmann1975, riss1989,preisstiser1982}, with Preiss and Ti\v{s}er  ultimately showing in 1991 \cite{PreissTisier91} that all separable Banach spaces possess the ball property.  
In parallel, Davies \cite{davies1971} had already demonstrated in 1971 that both properties can fail in compact metric spaces. 

It remained therefore unclear what intrinsic structural properties a metric space should possess for the collection of open (or, equivalently, closed)  balls to determine measures. Studen{\'y} \cite{studeny1983} proved that Davies' counterexample could be isometrically embedded into a compact Abelian group with a translation-invariant metric. Although Christensen \cite{christensen1980} showed that such groups possess the ball property, Studen{\'y}'s result revealed that isometric subsets do not inherit this property. In the same paper, Studen{\'y} therefore asked if Davies' space could be isometrically embedded into a Hilbert space.  Only recently, van Delft \cite{vD26} provided a definitive negative answer to Studen{\'y}'s question by proving that spaces isometrically embeddable into a Hilbert space form a strict subclass of the collection of separable metric spaces that possess the ball property. This confirms that the Hilbert space geometry is effectively too rigid to `hide' Davies-type counterexamples, automatically forcing measure-determination on all its isometric subsets.  

While this does settle the embedding question, the main question is still unresolved: what geometric and/or measure-theoretic properties drive the ball property in general metric spaces? This article provides a more comprehensive understanding of the property. In \autoref{sec2}, we establish a necessary and sufficient condition for the failure of the ball property based on the localized combinatorial capacity of the collection of open balls $\textbf{B}$. We prove that the ball property fails on a separable metric space if and only if the space exhibits \textit{charged dyadic shattering} (\autoref{thm:dyadicallyshatter}). This means that the collection of balls must have sufficient combinatorial freedom to allow  recursive spatial partitioning (dyadic shattering) without forcing every ball-annihilating signed measure (or charge) to identically vanish. While topological dyadic shattering is satisfied for many metric spaces, including non-trivial separable Banach spaces, the geometric rigidity of balls prevents the space from sustaining a non-zero charge across every branch. To investigate these cases, we introduce a further characterization of the ball property via its generated $\lambda$-system. We show this system contains the separating class that was used in \cite{PreissTisier91} to derive the ball property, thereby placing Preiss and Ti\v{s}er's fundamental theorem for Banach spaces within a unified, general measure-theoretic framework. 

Separable Banach spaces or spaces that are isometric to a subset of a Hilbert space are the only known infinite-dimensional cases where the geometric rigidity successfully prevents charged dyadic shattering. To understand what can be concluded for infinite-dimensional spaces under more complex geometries in a natural setting, we consider countable products of Polish spaces in \autoref{sec:sec3}. By relating the cylinder outer measure to the centered metric outer measure, a necessary and sufficient measure-theoretic condition for the ball property to hold on a countable product space is obtained in \autoref{thm:ballprod}. From this, it becomes clear that without Hilbert geometry or the rigidity of a vector space structure, the lack of geometric homogeneity combined with the combinatorial capacity of the collection of open (or closed) balls allows metric outer measures to effectively decouple from the cylinder outer measures. 
Consequently, we conjecture that in these cases, the failure of the ball property is inherently measure-dependent. More specifically, that there always exists a Borel probability measure for which the metric outer measure can be decoupled from the cylinder outer measure, precluding a purely geometric or topological characterization for the space as a whole.  These results are then used to deconstruct the pathology of Davies' classic counterexample, illustrating the exact mechanics behind it. 

In \autoref{sec:sec4}, we consider (minimal) structure-preserving generalizations of the ball property for spaces where the standard ball property fails. This is of interest, especially in probability and statistical theory for complex stochastic systems where simple structure-preserving separating classes are of the essence. We introduce the \textit{$n$th-order ball property}, that is, the property that the collection of intersections of at most $n$ open balls is a separating class of sets. In particular, we show that Davies' metric space satisfies the second-order ball property, demonstrating that minimal combinatorial generalization can fully recover measure determination, even in certain highly pathological constructions. 

Finally, these findings are summarized into a structural taxonomy in \autoref{sec:sec5}, showing how distinct structural conditions---metric geometry, combinatorial capacity and vector space structure---interact to guarantee or preclude measure-determination via the collection of balls, thereby exposing the mechanics driving the decoupling of outer measures ($\mu^\star_{\textbf B} > \mu^\star_{\mc{C}}$).

\section{Structural conditions for the ball property}\label{sec2}

\subsection{Local combinatorial capacity and dyadic shattering}\label{sec:sec21}

For a separable metric space $(S,d_S)$, denote by $B(x,r) = \{y \in S: d_S(x,y) < r\}$ the open ball centered at $x$ of radius $r$, and ${\bf B}=\{B(x,r): x \in S, r >0\}$ the collection of open balls. We start with the following functional-analytic necessary and sufficient condition given in \cite{vD26}:
\begin{lemma}\label{lemm:ballsspan}
    Let $(S, d_{S})$ be a separable metric space. Then the linear span of $\{1_B: B \in {\bf B}\}$ is dense in $L^1(S,\mc{B}(S),\mu)$ for every Borel probability measure $\mu$ if and only if 
    ${\bf B}$ forms a separating class. 
\end{lemma}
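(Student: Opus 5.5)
The plan is to prove both directions by Hahn--Banach duality in $L^1(\mu)$, using that $(L^1(\mu))^\ast = L^\infty(\mu)$ for a finite (hence $\sigma$-finite) measure $\mu$, together with the standard fact that two finite Borel measures that agree on $S$ itself and are dominated by a common probability measure are equal iff their densities agree a.e.

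\emph{($\Leftarrow$) Separating class implies density.} Fix a Borel probability measure $\mu$ and suppose the closed linear span $V$ of $\{1_B : B\in{\bf B}\}$ is a proper closed subspace of $L^1(\mu)$. By Hahn--Banach there is $g\in L^\infty(\mu)$, $g\neq 0$ in $L^\infty(\mu)$, with $\int_B g\,d\mu = 0$ for all $B\in{\bf B}$. Replacing $g$ by its real or imaginary part if needed, we may take $g$ real-valued, and after scaling we have $\|g\|_\infty\le 1$. Define the finite positive Borel measures
\begin{equation*}
\nu_1 = (1+g)\,\mu, \qquad \nu_2 = (1-g)\,\mu .
\end{equation*}
Then $\nu_1(B)-\nu_2(B) = 2\int_B g\,d\mu = 0$ for every open ball $B$. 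Since $S$ is itself an open ball whenever it is bounded, we get $\nu_1(S)=\nu_2(S)$; if $S$ is unbounded, write $S=\bigcup_n B(x_0,n)$ and pass to the limit. Normalising by the common total mass gives two Borel probability measures that agree on ${\bf B}$. By the separating-class hypothesis they are equal, so $g=0$ $\mu$-a.e., a contradiction.

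\emph{($\Rightarrow$) Density implies separating class.} Let $\mu,\nu$ be Borel probability measures with $\mu(B)=\nu(B)$ for all $B\in{\bf B}$. Put $\lambda=(\mu+\nu)/2$, and let $f=d\mu/d\lambda$ and $h=d\nu/d\lambda$, both in $L^\infty(\lambda)$ with values in $[0,2]$. The functional $\phi\mapsto\int\phi\,(f-h)\,d\lambda$ is continuous on $L^1(\lambda)$ and vanishes on every $1_B$. By the density assumption applied to $\lambda$, it vanishes on all of $L^1(\lambda)$. Testing against $1_A$ for Borel $A$ gives $\mu(A)=\nu(A)$.

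The only delicate point is the total-mass normalisation in the first direction, where we must ensure $\nu_1(S)=\nu_2(S)$; this is handled by the exhaustion of $S$ by balls, using continuity from below. Otherwise the argument is routine duality. The same proof works verbatim for closed balls, provided ${\bf B}$ is interpreted accordingly.
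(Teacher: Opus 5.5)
Your proof is correct. Both directions use the standard $L^1$--$L^\infty$ Hahn--Banach duality, which is the same mechanism the paper relies on: it cites the lemma from van Delft (2026) and immediately restates failure of the ball property as the existence of a nonzero $g\in L^\infty(\mu)$ whose charge $\eta=g\,d\mu$ annihilates all balls. The perturbation $(1\pm g)\mu$ with $\|g\|_\infty\le 1$, together with the exhaustion $S=\bigcup_n B(x_0,n)$ giving $\int_S g\,d\mu=0$, is exactly what turns that charge into two distinct probability measures that agree on ${\bf B}$.
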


For the ball property to fail there must thus exist a Borel probability measure $\mu$ and a nonzero $g \in L^\infty(S,\mc{B}(S),\mu)$ such that the signed measure $\eta=g d\mu$, annihilates the balls, i.e., $\eta(B)=0$ for all $B \in {\bf B}$, while for some $f \in L^1(S,\mc{B}(S),\mu)$
    \[
    \eta(f)\neq 0. \tageq \label{eq:annif}
    \]
In particular, it is necessary that there exists a collection of balls, say $\{B_1, \ldots, B_n\}$, such that \eqref{eq:annif} holds for $f= 1_{\cap_{i=1}^n B_i}$. Indeed, if the collection of intersections lies in the closure of the linear span of the balls, then by the $\pi$-$\lambda$ theorem it is straightforward to show that the ball property must hold. 
In order to formalize a necessary and sufficient characterization of the ball property, we introduce a localized, tree-structured notion of shattering by  collections of open balls.

\begin{definition}[Dyadic shattering]\label{def:dyadicallyshat}
    Let $(S,d_S)$ be a metric space and let ${\textbf B}$ be the collection of open balls. A non-empty  set $A \subseteq S$ is said to be (topologically) \textit{dyadically shattered} by ${\textbf B}$ if there exists a family of non-empty subsets $(I_s)_{s \in 2^{<\mb{N}}}$ of $A$, and a corresponding family of open balls $(B_s)_{s \in 2^{<\mb{N}}} \subseteq {\textbf B}$, such that $I_\emptyset = A$, and such that for every finite binary string $s \in 2^{<\mb{N}}$:
\begin{enumerate}[label=(\roman*)]
    \item $I_{s^\frown 1} = I_s \cap B_s \neq \emptyset$,
    \item $I_{s^\frown 0} = I_s \cap B_s^\complement \neq \emptyset$,
\end{enumerate}
where $s \frown i$ denotes the concatenation of $s$ with $i \in \{0,1\}$. 
We will say that a metric space $(S,d_S)$ exhibits  \textit{charged  dyadic shattering} if there exists a non-empty Borel measurable set $A \subseteq S$ that is dyadically shattered by ${\textbf B}$ and  there exists a finite signed Borel measure $\eta$  annihilating ${\textbf B}$, such that  every generated subset $I_s$ satisfies $|\eta|(I_s)>0$.
\end{definition}

Note that topological dyadic shattering is a natural consequence in any metric space containing a non-empty set with no isolated points. Every relative open neighborhood in such a set contains infinitely many points, allowing one to recursively bisect any region with open balls. Since the support of a  diffuse measure on a Polish space must be a perfect set, this property is very common and holds, for example, for $\rnum^d$. However, the charged dyadic shattering property is much more delicate. Charged dyadic shattering tells us that the metric space does not just have the topological capacity to dyadically shatter a non-empty subset of $S$, but also that it has the geometric capacity to support a nonzero `charge' throughout this recursive partition  that completely cancels on the balls themselves. The choice of the term `charge' recognizes the classical nomenclature  of Kolmogorov and Fomin \cite{kolfom}, who refer to countably additive signed measures as `charges'.

\begin{thm}\label{thm:dyadicallyshatter}
Let $(S,d_S)$ be a separable metric space. The ball property fails on $(S,d_S)$ if and only if it exhibits charged dyadic shattering.
\end{thm}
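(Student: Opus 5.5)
The plan is to prove the two implications separately. The reverse direction follows from a Jordan decomposition. The forward direction rests on a splitting lemma: any Borel set carrying positive $|\eta|$-mass can be cut by some open ball into two pieces that both carry positive $|\eta|$-mass.

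\emph{Charged dyadic shattering implies failure.} Let $\eta$ be a finite signed Borel measure annihilating ${\bf B}$ with $|\eta|(I_s)>0$ for every $s$. In particular $|\eta|(A)>0$, so $\eta\neq 0$. Write $\eta=\eta^+-\eta^-$ (Jordan decomposition), so that $\eta^+\neq\eta^-$ and $\eta^+(B)=\eta^-(B)$ for every open ball $B$.

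Fix $x_0\in S$. Since $S=\bigcup_n B(x_0,n)$ is an increasing union, countable additivity gives $\eta(S)=\lim_n \eta(B(x_0,n))=0$. Hence $\eta^+(S)=\eta^-(S)=c>0$, and $\mu=\eta^+/c$ and $\nu=\eta^-/c$ are distinct Borel probability measures that agree on all open balls.

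They also agree on closed balls, since $\bar B(x,r)=\bigcap_n B(x,r+1/n)$ is a decreasing intersection. So the ball property fails.

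\emph{Failure implies charged dyadic shattering.} Take Borel probability measures $\mu\neq\nu$ agreeing on balls, passing between closed and open balls by the same monotone limits. Set $\eta=\mu-\nu$, which annihilates ${\bf B}$ and is nonzero, and take $A=S$, so that $|\eta|(A)>0$.

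The splitting lemma to prove is: \emph{if $I$ is Borel with $|\eta|(I)>0$, then there is $B\in{\bf B}$ with $|\eta|(I\cap B)>0$ and $|\eta|(I\cap B^\complement)>0$.}

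Given the lemma, construct $(I_s,B_s)$ recursively: set $I_\emptyset=S$, choose $B_s$ by the lemma applied to $I_s$, and put $I_{s^\frown 1}=I_s\cap B_s$ and $I_{s^\frown 0}=I_s\cap B_s^\complement$. By induction every $I_s$ is Borel with $|\eta|(I_s)>0$, hence non-empty. This is exactly charged dyadic shattering.

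\emph{Proof of the splitting lemma (the main step).} Suppose it fails for some $I$. Then the finite measure $\rho=|\eta|(\,\cdot\,\cap I)$ satisfies $\rho(B)\in\{0,\rho(S)\}$ for every open ball $B$.

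By separability, for each $n$ the space $S$ is covered by countably many balls of radius $1/n$. By countable subadditivity one of them, say $B_n$, has $\rho(B_n)>0$, and therefore $\rho(B_n)=\rho(S)$. Then $K=\bigcap_n B_n$ has full $\rho$-mass, so $K\neq\emptyset$, and $\operatorname{diam}K=0$. Hence $K=\{x\}$ and $\rho=\rho(S)\,\delta_x$, so $|\eta|(\{x\})\ge\rho(\{x\})>0$, which gives $\eta(\{x\})\neq 0$.

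On the other hand, continuity from above gives $\eta(\{x\})=\lim_n\eta(B(x,1/n))=0$, a contradiction. The only delicate point is this atom argument, namely extracting a point mass from a measure that is $0$–$1$ valued on balls. Separability is what makes it work, through the countable covers by small balls.
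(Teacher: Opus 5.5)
Your proof is correct, and its core is the same as the paper's. You recursively split a set of positive mass by a ball, using the fact that a measure which gives each ball either zero or full mass relative to $I$ must be a point mass. That is impossible, because any ball-annihilating $\eta$ satisfies $\eta(\{x\})=\lim_n\eta(B(x,1/n))=0$.

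The differences are in how the argument is packaged.
\begin{itemize}
\item \emph{Root of the tree.} The paper first passes to a minimal order $n^\star$ at which some intersection of balls has nonzero $\eta$-measure. It shows the $n^\star$ balls cut $S$ into $2^{n^\star}$ Boolean atoms, each of $\eta$-measure $\pm c$, and roots the tree at one of these atoms. You root the tree at $A=S$.
\item \emph{Measure used for splitting.} The paper splits with respect to $\eta^+$, using that $\eta^+$ inherits diffuseness from $\eta$. You split directly with respect to $|\eta|$. The only extra fact you use is $|\eta|(\{x\})=|\eta(\{x\})|$ for singletons, which is true but worth stating.
\item \emph{Atom lemma.} The paper shows that the union of the $\nu|_A$-null balls is null (via second countability) and that its complement in $A$ is a single point. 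You intersect full-mass balls of radius $1/n$ to obtain a singleton.
\end{itemize}

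Each route buys something. Yours is shorter and shows the minimal-order step is unnecessary for the equivalence. In fact, your splitting lemma shows that every Borel set of positive $|\eta|$-measure is dyadically shattered with $|\eta|$ positive along every branch. The paper's detour yields extra dual-shattering information ($2^{n^\star}$ nonempty atoms, hence $VC^\star(\mathbf{B})\ge n^\star$), which it uses in the subsequent remark.

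You also spell out the converse that the paper calls immediate. This includes the observation that $\eta(S)=\lim_n\eta(B(x_0,n))=0$, which is what lets $\eta^+$ and $\eta^-$ normalize to two distinct probability measures with equal total mass. It also includes the monotone-limit transfer between open and closed balls.
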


The proof of this theorem relies on the following result, which is proven in the appendix.

\begin{lemma}\label{lem:atomforce}
    Let $(S,d_S)$ be a separable metric space and let $\nu$ be a finite nonnegative Borel measure with $\nu(A) = k >0$ for some nonempty Borel measurable set $A$. If $\nu(A \cap B) \in \{0,k\}$ for all $B \in \textbf{B}$ then $\nu$ restricted to $A$ is atomic, i.e., $\nu\vert_A =  k \delta_{x_0}$ for some unique $x_0 \in {A}$.
\end{lemma}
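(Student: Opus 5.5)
The plan is to use separability to locate, at every scale, a single ball carrying all of the mass of $A$. Intersecting these balls then pins the mass down to one point. No appeal to \autoref{thm:dyadicallyshatter} or \autoref{lemm:ballsspan} is needed; the argument uses only countable additivity of $\nu$ and the dichotomy $\nu(A\cap B)\in\{0,k\}$.

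\emph{Step 1 (a full-mass ball at each scale).} Fix a countable dense set $D=\{d_1,d_2,\ldots\}\subseteq S$, which exists by separability. For each $n\in\mb{N}$ the balls $B(d_j,1/n)$, $j\in\mb{N}$, cover $S$. Countable subadditivity then gives
\[
k=\nu(A)\le \sum_{j} \nu\bigl(A\cap B(d_j,1/n)\bigr).
\]
Hence at least one term is strictly positive, and by hypothesis that term equals $k$. Choose such a ball and call it $B_n=B(d_{j_n},1/n)$, so that $\nu(A\cap B_n)=k$. Equivalently, $\nu(A\setminus B_n)=\nu(A)-\nu(A\cap B_n)=0$, which uses finiteness of $\nu$.

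\emph{Step 2 (intersecting).} Set $C=A\cap\bigcap_{n}B_n$. This is a Borel set, and
\[
A\setminus C=\bigcup_n (A\setminus B_n)
\]
is a countable union of $\nu$-null sets. Therefore $\nu(C)=k>0$, and in particular $C\neq\emptyset$. On the other hand, $C\subseteq B_n$ for every $n$, so $\operatorname{diam}(C)\le 2/n$ for all $n$, and $C$ contains at most one point. Thus $C=\{x_0\}$ for some $x_0\in A$, and $\nu(\{x_0\})=k$.

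\emph{Step 3 (identifying $\nu|_A$ and uniqueness).} Let $E$ be Borel. If $x_0\in E$, then $k=\nu(\{x_0\})\le\nu(A\cap E)\le\nu(A)=k$. If $x_0\notin E$, then $A\cap E\subseteq A\setminus\{x_0\}$, which has measure $k-k=0$. Hence $\nu(A\cap E)=k\,\delta_{x_0}(E)$, i.e.\ $\nu|_A=k\delta_{x_0}$.

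For uniqueness, suppose $\nu|_A=k\delta_{x_1}$ with $x_1\neq x_0$. Evaluating at $E=\{x_0\}$ gives $k=\nu(\{x_0\})=k\,\delta_{x_1}(\{x_0\})=0$, contradicting $k>0$.

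There is no serious obstacle. The only points needing care are that the dichotomy is applied to sets of the form $A\cap B$ with $B$ an open ball, which is exactly the form of $A\cap B(d_j,1/n)$, and that finiteness of $\nu$ justifies the subtraction in Step 1.
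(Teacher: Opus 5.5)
Your proof is correct, and it takes a genuinely different route from the paper's.

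The paper works from the outside. It collects all balls $B$ with $\nu(A\cap B)=0$ and uses second countability to write their union $U$ as a countable union, so that $\nu(A\cap U)=0$. It then shows that $A\setminus U$ is a singleton by contradiction. Every point $x\in A\setminus U$ has $\nu(A\cap B(x,1/n))=k$ for all $n$, so continuity from above gives $\nu(\{x\})=k$, and two distinct such points would carry total mass $2k>k$.

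You work from the inside. At each scale you use the countable dense set to select a single full-mass ball of radius $1/n$, and you intersect these balls. The singleton then comes from the diameter bound $\operatorname{diam}(C)\le 2/n$ rather than from a mass-counting contradiction.

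Your version buys some economy. It is constructive and has no contradiction step. It applies the dichotomy only to the explicit countable family $B(d_j,1/n)$, and it delivers $x_0\in A$ and $\nu(\{x_0\})=k$ in one stroke, without continuity from above at individual points. The paper's version makes no choices. It identifies $x_0$ canonically as the unique point of $A$ not covered by any $\nu(A\cap\cdot)$-null ball, which is essentially the support of $\nu|_A$ intersected with $A$.

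Both arguments use separability in the same essential way: countably many balls suffice to exhaust the relevant mass.
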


\begin{proof}[Proof of \autoref{thm:dyadicallyshatter}]
For the ball property to fail, there must exist some minimal order $n^\star\ge 2$ and a collection of balls $\{B_1, \ldots, B_{n^\star}\}$ such that
\[
\eta\Big(\bigcap_{i=1}^{n^\star} B_i\Big)=c \neq 0,
\]
and such that all intersections up to index $n^\star-1$ still have $\eta$ measure zero. For each $n$, the balls generate the collection of atoms $\mc{A}_n$ of a Boolean algebra that partitions the space into sets of the form:
    \[
    A = \Big(\bigcap_{j \in J} B_j\Big) \bigcap \Big(\bigcap_{i \in J^\complement} B^\complement_i\Big),  
    \]
    where $J \subseteq \{1,2, \ldots,n\}$. Note that $|\mc{A}_n|=2^n$. Evaluating $\eta$ on this collection yields the collection of real variables
$\{\eta(A): A \in \mc{A}_n\}$ where 
\[
\eta(A) = \sum_{I \subseteq J^\complement}(-1)^{|I|}\eta\Big(\bigcap_{\ell \in J \cup I}  B_\ell\Big).
\]
For any $n < n^\star$, we have $|J \cup I| \le n <n^\star$ and so $\eta(A) = 0$ for all $A \in \mc{A}_{n}$. If $n=n^\star$ then every proper subset $I \subset J^\complement$ produces an intersection of order $|J \cup I| < n^\star$, and therefore has $\eta$ measure zero. Furthermore, by assumption: $
\eta\Big(\bigcap_{\ell=1}^{n^\star}  B_\ell\Big)= c \neq 0$. The only term that  therefore survives for any $A \in \mc{A}_{n^\star}$ is the term where $I=J^\complement$, i.e., 
\[
\eta(A) =(-1)^{|J^\complement|} \eta\Big(\bigcap_{\ell=1}^{n^\star}  B_\ell\Big).
\]
Hence, we must have
\[
\eta(A) = \pm c \neq 0, \quad \forall A \in \mc{A}_{n^\star}.
\]
This means that none of the $2^{n^\star}$ atoms can be  empty, i.e., the collection of $n^\star$ balls must partition the space into  $2^{n^\star}$ non-empty pieces. This means the collection of $n^\star$ balls is dually  shattered \citep[see e.g.,][]{assouad1983,matousek2002}. Furthermore, we show that the measure $\eta$ must be diffuse. First countability of a metric space implies 
\[
\{x\} = \bigcap_{k=1}^\infty B(x,r_k)
\]
where $\{r_k\}_{k \in \mb{N}}$ is a decreasing sequence of real numbers such that $r_k \to 0$ as $k \to \infty$. Then, by continuity from above and by the fact that $\eta$ annihilates the collection of balls
\[
\eta(\{x\}) = \lim_{k \to \infty}\eta(B(x,r_k)) = 0. \tageq\label{eq:cfa_sing}
\]
Hence, the measure $\eta$ must be non-atomic. 
Fix an atom $I_\emptyset \in \mc{A}_{n^\star}$ so that $\eta(I_\emptyset) = \pm c \neq 0$. By the Hahn decomposition $\eta=\eta^+-\eta^-$, at least one of the two mutually singular parts must give strictly positive mass to $I_\emptyset$. Assume without loss of generality that $\eta^+(I_\emptyset) = k > 0$. Since $\eta$ is a finite diffuse measure, $\eta^+$ is a finite, nonnegative diffuse measure. Consider the restricted measure $\eta^+\vert_{I_\emptyset}(E) = \eta^+(I_\emptyset \cap E)$ for $E \in \mc{B}(S)$. Because it is non-atomic, we can apply the contrapositive of \autoref{lem:atomforce} to $\eta^+\vert_{I_\emptyset}$, which implies that there must exist a ball $B_\emptyset \in \textbf{B}$ such that \[ \eta^+(I_\emptyset \cap B_\emptyset) \notin \{0,k\}. \] This ball splits $I_\emptyset$ into two new components $I_1 = I_\emptyset \cap B_\emptyset$ and $I_0 = I_\emptyset \cap B_\emptyset^\complement$, both having strictly positive $\eta^+$-measure. 
Assume inductively that for some finite binary string $s$, the set $I_s$ has been constructed such that $\eta^+(I_s) > 0$. The restricted measure $\eta^+\vert_{I_s}$ is a non-atomic, nonnegative measure with strictly positive mass. By the contrapositive of \autoref{lem:atomforce}, there exists a ball $B_s \in \textbf{B}$ such that \[ 0 < \eta^+(I_s \cap B_s) < \eta^+(I_s). \] This defines the children $I_{s^\frown 1} = I_s \cap B_s$ and $I_{s^\frown 0} = I_s \cap B_s^\complement$, both of which are non-empty and possess strictly positive measure. By induction, this process continues indefinitely, constructing the families of subsets $(I_s)_{s \in 2^{<\mb{N}}}$ and open balls $(B_s)_{s \in 2^{<\mb{N}}}$ satisfying the conditions of \autoref{def:dyadicallyshat}. The other direction is immediate. \end{proof}

From the proof of \autoref{thm:dyadicallyshatter} it is clear that the separable metric space must have cardinality of the continuum,  as it has to support a diffuse measure. Classical conditions ensuring the ball property—such as finite doubling dimension, the Besicovitch covering property, or the existence of a uniform measure \citep{christensen1980}—provide control by imposing metric packing bounds, limiting local volume growth, or enforcing spatial homogeneity \citep[see e.g.,][]{federer1969,heinonen}. \autoref{thm:dyadicallyshatter} introduces a distinct condition based on the topological and geometric richness of the space: the collection of balls has  sufficient combinatorial capacity to allow recursive spatial partitioning without forcing every annihilating signed measure to identically vanish. If the ball property fails, all of these must fail, indicating at least intuitively that such a space lacks the standard geometric and combinatorial regularity typically relied upon in metric geometry, probability and statistics.
\begin{Corollary}\label{cor:ballfail}
    Suppose the ball property fails on the separable metric space $(S,d_S)$. Then the following hold true: \begin{enumerate}[label=(\roman*)]
        \item The metric space exhibits charged dyadic shattering;
        \item The metric space is not directionally limited and thus does not satisfy a Besicovitch-type covering property; 
        \item The metric space does not have finite doubling dimension; 
        \item The metric space cannot support a uniform measure;
        \item The metric space does not isometrically embed into a Hilbert space.
    \end{enumerate}
\end{Corollary}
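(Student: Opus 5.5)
The corollary is a collection of contrapositives. Each item (ii)--(v) names a classical sufficient condition for the ball property, and (i) is \autoref{thm:dyadicallyshatter} itself. The plan is therefore to assume the ball property fails and, for each item, invoke the known positive result in contrapositive form. Item (i) is immediate from the ``only if'' direction of \autoref{thm:dyadicallyshatter}. No new construction is needed.

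For (ii), I will use the result recalled in the introduction: separable metric spaces that are directionally limited in the sense of Federer satisfy a Besicovitch-type covering theorem, and this covering property implies that balls determine finite Borel measures (see \cite{federer1969,buet2016}). So if the space were directionally limited, the ball property would hold, which is a contradiction. Since ``does not satisfy a Besicovitch-type covering property'' is stated as a consequence, I will phrase it via the implication chain: directional limitation $\Rightarrow$ Besicovitch covering $\Rightarrow$ measure determination. The failure of measure determination then rules out the covering property directly, and a fortiori directional limitation.

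The remaining items follow the same pattern. For (iii), I will cite Anderson's result (as reported in \cite{hoffmann1975}) that finite doubling dimension implies the ball property. Alternatively, I can note that finite doubling dimension yields a Besicovitch-type covering for $\mu$-almost all centers and then reduce to (ii). For (iv), I will cite Christensen \cite{christensen1980}: the existence of a uniform measure makes the open balls a separating class. For (v), I will cite \cite{vD26}: isometric subsets of a Hilbert space have the ball property. Separability is preserved by the relevant notions, so each cited theorem applies to $(S,d_S)$.

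The main obstacle is bookkeeping rather than mathematics. The only delicate point is to check that each cited sufficient condition is stated in the literature for the same notion of the ball property: open versus closed balls, and finite Borel measures on a separable space. To handle this, I will add a brief remark that determination by open balls and by closed balls are equivalent, as the paper asserts in the introduction. This is justified by writing $B(x,r)=\bigcup_k \bar B(x,r-1/k)$ and $\bar B(x,r)=\bigcap_k B(x,r+1/k)$ and applying continuity of measures.
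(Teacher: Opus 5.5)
Your proposal is correct and follows the paper's own justification: (i) is the forward direction of \autoref{thm:dyadicallyshatter}, and (ii)--(v) are contrapositives of the classical sufficient conditions (Federer's directional limitation via Besicovitch covering, Anderson's doubling result, Christensen's uniform-measure result, and \cite{vD26}); your chain in (ii) also states the logical direction more carefully than the corollary's ``and thus'' wording. One caution: drop the ``alternative'' route for (iii), because finite doubling dimension does not imply a Besicovitch-type covering property (the Heisenberg group with the Kor\'anyi metric is doubling yet fails it), so that reduction to (ii) is invalid and the direct citation has to carry the item.
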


Condition (i) provides a necessary and sufficient characterization, and therefore it logically implies (ii)–(v). Furthermore, (v) follows from a recent result in \citep{vD26}, which effectively demonstrates that Hilbert spaces have too rigid a geometry to `hide' Davies-type counterexamples. This resolves an open problem first raised in \citep{studeny1983} regarding whether such pathological constructions could be isometrically embedded into a Hilbert space---a question that ensued from the fact that Studen{\'y} showed they can be isometrically embedded into compact metric groups that do possess the ball property. 

It is important to emphasize that the ball property can still hold on spaces exhibiting all of the latter four negative traits. Indeed, infinite-dimensional Banach spaces generally have high combinatorial capacity (e.g., possessing both infinite VC dimension for the collection of balls and topological dyadic shattering), lack the Besicovitch covering property, do not possess finite doubling dimension, and cannot support a uniform measure (due to the non-compactness of their unit ball), yet they still satisfy the ball property. We investigate how this persists in the bigger picture via a necessary and sufficient measure-theoretic condition in the next subsection.

\begin{Remark}[Relation to empirical process theory and Boolean $\sigma$-independent sequences]\label{rem:vanhandel}
\autoref{thm:dyadicallyshatter} establishes that the failure of the ball property guarantees the existence of an integer $n^\star$ such that $VC^\star(\textbf{B}) \ge n^\star$. By Assouad's theorem \cite{assouad1983}, this implies the lower bound $VC(\textbf{B}) \ge \flo{\log_2(n^\star)}$. A natural question is whether this dual shattering can be extended indefinitely to conclude that $VC(\textbf{B})=\infty$. Generally, failure of the ball property only guarantees that there exists a specific collection of $n^\star$ balls such that all Boolean atoms they generate have nonzero signed measure. 
As seen from the proof, let $I_{\emptyset}$ be such an atom. To construct a collection of open balls $\tilde{{\textbf{B}}}=\{ \tilde{B}_1, \ldots, \tilde{B}_m\}$ such that all $2^m$ generated atoms carry nonzero measure, we require that for each step $m\ge 2$, there exists a single ball $\tilde{B}_m \in \textbf{B}$ that simultaneously bisects all $2^{m-1}$ sub-atoms generated by $\{ \tilde{B}_1, \ldots, \tilde{B}_{m-1}\}$ inside $I_\emptyset$. In empirical process theory, the existence of such a global infinite sequence of sets leads to a corresponding \textit{Boolean $\sigma$-independent sequence} \cite{vanHandel2013}, which characterizes the failure  of the universal Glivenko-Cantelli property. If the collection of indicator functions of the balls contains such a sequence then it is well-known that $VC(\textbf{B})=\infty$.  Because the dyadic  shattering allows branch-dependent choices of balls rather than enforcing a single global sequence, the failure of the ball property does not imply the existence of such a Boolean $\sigma$-independent sequence.

Consequently, working under the assumption of the ball property can be seen as a more permissive framework than standard Glivenko-Cantelli requirements. Indeed, infinite-dimensional separable Banach spaces satisfy $\text{VC}(\textbf{B}) = \infty$ (and thus fail the Uniform Glivenko-Cantelli property), yet they still preserve the ball property. This suggests that $\text{VC}(\textbf{B})  <\infty$ is too restrictive a benchmark for metric measure-theoretic regularity. However, weak convergence uniformly over a given function class is often derived via methods from empirical process theory \cite{vdVW,ArconesYu,GineZinn}.  Traditionally, a key assumption is the requirement that the functions form a VC-subgraph class. If these functions are distance-based, this effectively boils down to the constraint that $VC({\textbf B})<\infty$. However, as demonstrated in earlier work \cite{vD26}---in which limit theory was developed for nonstationary processes  taking values in a general metric space that possesses the ball property---such assumptions are technically redundant. Indeed, in that framework FCLTs for corresponding nonstationary random fields were obtained directly from first-principles fidi convergence and relative compactness.
\end{Remark}

\subsection{Characterization via $\lambda$-systems}

Consider again a separable metric space $(S,d_S)$ and denote by $\mc{M}(S)$ the space of finite signed Borel measures on $(S,\mc{B}(S))$ with $\eta(S)=0$. Observe that for any $\eta \in \mc{M}(S)$, the set 
\[
\mathcal{L}_\eta = \{A \in \mc{B}(S): \eta(A)=0\}
\]
forms a $\lambda$-system. 
Clearly, if this $\lambda$-system contains all finite intersections of the collection of open balls, then the ball property must hold since this collection forms a $\pi$-system that generates the Borel $\sigma$-algebra. However, we know from \autoref{rem:vanhandel} that even spaces with $VC(\textbf{B})<\infty$ can permit intersections to escape $\mathcal{L}_\eta$: if $VC^\star(\textbf{B})< n$ then $n$ cannot be the minimal order at which an intersection of balls lies outside $\mathcal{L}_\eta$, but it does not force all intersections to lie within the system.  All separable Banach spaces are determined by their values on balls, yet their collection of balls can have arbitrarily high combinatorial complexity (e.g., $VC(\textbf{B})=\infty$), and generally fails to form a $\pi$-system. To understand what happens, consider any $\eta \in \mc{M}(S)$ for which the collection ${\bf B}$ lies in $\mc{L}_\eta$. Then the monotone class generated by the collection of balls, denoted by $\mc{M}({\bf B})$, satisfies $\mc{M}({\bf B}) \subseteq \mc{L}_\eta$. Moreover, the smallest $\lambda$-system generated by the balls, $\lambda({\bf B})$, satisfies $\lambda({\bf B}) \subseteq \mc{L}_\eta$. We straightforwardly obtain the following equivalence:

\begin{Proposition}\label{prop:lambdasepc}
    The ball property holds on a separable metric space $(S,d_S)$ if and only if $\lambda({\bf B})$ is a separating class of sets. 
\end{Proposition}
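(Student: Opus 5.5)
The plan is to prove both implications directly from the definitions, using that for every pair of finite Borel measures $\mu,\nu$ with equal total mass, the family
\[
\mc{L}_{\mu,\nu}=\{A\in\mc{B}(S): \mu(A)=\nu(A)\}
\]
is a $\lambda$-system. Here ``separating class'' is understood as in \autoref{lemm:ballsspan}: two Borel probability (or finite) measures that agree on the class agree on $\mc{B}(S)$.

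For the forward direction, assume the ball property holds. Then any two measures agreeing on $\lambda({\bf B})$ in particular agree on ${\bf B}\subseteq\lambda({\bf B})$, so they coincide on $\mc{B}(S)$. Hence $\lambda({\bf B})$ is separating. Since a superset of a separating class is separating, this direction needs no further argument.

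For the converse, assume $\lambda({\bf B})$ is separating and let $\mu,\nu$ be Borel probability measures with $\mu(B)=\nu(B)$ for all $B\in{\bf B}$. I first check that $\mc{L}_{\mu,\nu}$ is a $\lambda$-system.
\begin{enumerate}[label=(\alph*)]
\item It contains $S$. Either $S$ is itself a ball (if $S$ is bounded), or $S=\bigcup_k B(x_0,k)$ is an increasing union, and continuity from below gives $\mu(S)=\nu(S)$. For probability measures this holds trivially anyway.
\item It is closed under proper differences: if $A\subseteq C$ with both in $\mc{L}_{\mu,\nu}$, then $\mu(C\setminus A)=\mu(C)-\mu(A)=\nu(C\setminus A)$ by finiteness.
\item It is closed under increasing unions by continuity from below.
\end{enumerate}
Since ${\bf B}\subseteq\mc{L}_{\mu,\nu}$, minimality of $\lambda({\bf B})$ gives $\lambda({\bf B})\subseteq\mc{L}_{\mu,\nu}$. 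So $\mu$ and $\nu$ agree on $\lambda({\bf B})$, and because that class is separating, $\mu=\nu$ on $\mc{B}(S)$. This is exactly the ball property.

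Equivalently, one can phrase the argument through the signed measure $\eta=\mu-\nu\in\mc{M}(S)$, in line with the discussion preceding the statement. The ball property says every $\eta\in\mc{M}(S)$ with ${\bf B}\subseteq\mc{L}_\eta$ vanishes, and the inclusion $\lambda({\bf B})\subseteq\mc{L}_\eta$ transfers the annihilation from ${\bf B}$ to $\lambda({\bf B})$.

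The only step needing care is that $S\in\mc{L}_{\mu,\nu}$ when the class of measures is enlarged to finite measures; the increasing-union argument in (a) handles it. No result beyond the elementary closure properties is needed.
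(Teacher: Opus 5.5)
Your proof is correct and follows essentially the paper's argument: the paper observes that $\mc{L}_\eta=\{A\in\mc{B}(S):\eta(A)=0\}$ (your $\mc{L}_{\mu,\nu}$ with $\eta=\mu-\nu$) is a $\lambda$-system, so ${\bf B}\subseteq\mc{L}_\eta$ forces $\lambda({\bf B})\subseteq\mc{L}_\eta$. The reverse direction is the trivial inclusion ${\bf B}\subseteq\lambda({\bf B})$.
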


In fact, this establishes that the ball property does not require $\lambda({\bf B})=\mc{B}(S)$. It suffices that $\lambda({\bf B})$ contains a separating subclass, i.e., that the space supports a separating class that can be realized from open balls via standard $\lambda$-system operations, such as the countable monotone limit of nested open balls or the countable disjoint union of open balls. 

This characterization unifies classical results in the literature. Early investigations by Hoffmann-J{\o}rgensen \citep{hoffmann1975}, Christensen \citep{christensen1980}, and Riss \citep{riss1989} established the ball property under specialized geometric settings—such as spaces with uniform measures, Hilbert spaces, or vector spaces with smooth norms. This line of inquiry culminated in the general theorem of Preiss and Ti{\v{s}}er \citep{PreissTisier91}, proving that all separable Banach spaces possess the ball property. A closer inspection of their proof reveals that the key argument relies precisely on constructing open convex cones via countable, monotonic unions of expanding open balls (e.g., $D = \bigcup_{n=1}^\infty B(nx,n)$, where $\|x\|=1$) combined with Zorn's lemma to essentially establish the separating property for the collection of half-spaces. 
Through the lens of \autoref{prop:lambdasepc}, their result can be understood as establishing that the normed linear structure of any separable Banach space is sufficient to force $\lambda({\bf B})$ to be a separating class, regardless of the VC dimension of the balls.

It is important to recognize that half-spaces of the form $\{x \in E: f(x) <c\}$, where $c \in \mb{R}$ and $f \in E^\prime$ is a continuous linear functional on $E$, form a generating subclass of $1$-dimensional cylinder sets for $\mc{B}(S)$. We will push this realization further in the next section to show when outer measures constructed on such sets decouple from those constructed via the balls.

\section{Product spaces}\label{sec:sec3}

Consider a product space given by $\Omega = \prod_{n=1}^\infty E_n$, where each $E_n$ is a Polish space. We endow $\Omega$ with the product topology. It is well-known that the product $\sigma$-algebra $\F:= \otimes_{n=1}^\infty \mc{B}(E_n)$ coincides with the Borel $\sigma$-algebra on $\Omega$. 
Consider the coordinate mappings
\[
Y_ n: \Omega \to E_n, \quad Y_n(\omega) = \omega(n),
\]
and recall that two finite Borel measures $\mu, \nu$ on $(\Omega, \F)$ are equal if and only if 
\[
\mu(C)= \nu(C) 
\]
for all cylinder sets $C$. That is, they must agree on any set $C$ of the form $
 \bigcap_{i=1}^n Y^{-1}_{i}(A_{i})$
where  $A_{i} \in \mc{B}(E_{i})$ for $i =1, \ldots, n$, $n \in \mb{N}$. Denote the collection of cylinder sets by $\mc{C}$. 
\begin{lemma}\label{lem:cylininlamb}
Let $\Omega$ be the product space defined above. $\lambda({\textbf B})$ is separating if $\mc{C}  \subseteq \lambda({\textbf B})$.
    
\end{lemma}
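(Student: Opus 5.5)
The plan is to reduce the statement to the classical fact, recalled just above, that the cylinder sets $\mc{C}$ determine finite Borel measures on $(\Omega,\F)$. Here $\lambda(\textbf{B})$ is the smallest $\lambda$-system generated by the open balls of whatever metric $d_\Omega$ metrizes the product topology. The key observation is that $\lambda(\textbf{B})$ sits inside every $\lambda$-system $\mc{L}$ that contains the balls. So if $\mc{C}\subseteq\lambda(\textbf{B})$, then $\mc{C}$ lies inside every such $\mc{L}$. We then apply this with $\mc{L}=\{A\in\F:\mu(A)=\nu(A)\}$.

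Concretely, let $\mu,\nu$ be finite Borel measures on $\Omega$ with $\mu(B)=\nu(B)$ for all $B\in\textbf{B}$. The argument has four steps.

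First, check that $\mathcal{L}_{\mu,\nu}=\{A\in\F:\mu(A)=\nu(A)\}$ is a $\lambda$-system.
\begin{enumerate}[label=(\roman*)]
\item It contains $\Omega$. Indeed $\Omega=\bigcup_k B(\omega_0,k)$ is an increasing union of balls, so continuity from below gives $\mu(\Omega)=\nu(\Omega)$. This is why we do not need to assume equal total mass separately.
\item It is closed under proper differences $A\setminus A'$ for $A'\subseteq A$, by finiteness of the measures.
\item It is closed under increasing unions, by continuity from below.
\end{enumerate}
Equivalently, in the language of the preceding subsection, $\eta=\mu-\nu$ is a finite signed measure annihilating $\textbf{B}$, and $\mc{L}_\eta$ is a $\lambda$-system; the above shows $\eta(\Omega)=0$.

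Second, since $\textbf{B}\subseteq\mathcal{L}_{\mu,\nu}$ and $\mathcal{L}_{\mu,\nu}$ is a $\lambda$-system, minimality gives $\lambda(\textbf{B})\subseteq\mathcal{L}_{\mu,\nu}$.

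Third, the hypothesis $\mc{C}\subseteq\lambda(\textbf{B})$ then yields $\mu(C)=\nu(C)$ for every cylinder set $C$.

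Fourth, $\mc{C}$ is a $\pi$-system, because the intersection of two cylinder sets is again a cylinder set after padding with $Y_i^{-1}(E_i)$. It also generates $\F=\mc{B}(\Omega)$. Dynkin's $\pi$-$\lambda$ theorem then gives $\mu=\nu$ on $\F$. In other words, $\mc{C}$ is a separating subclass contained in $\lambda(\textbf{B})$, so $\lambda(\textbf{B})$ is separating. By \autoref{prop:lambdasepc}, this is equivalent to the ball property on $\Omega$.

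There is no serious obstacle. The only points needing care are these:
\begin{enumerate}[label=(\alph*)]
\item verifying $\Omega\in\mathcal{L}_{\mu,\nu}$ via an exhausting sequence of balls, since for a bounded product metric a single large ball already equals $\Omega$;
\item noting that the notion of ``separating'' for $\lambda(\textbf{B})$ is inherited from any separating subclass, which is immediate because agreement on the larger class implies agreement on the subclass.
\end{enumerate}
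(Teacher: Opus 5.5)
Your proposal is correct and is essentially the paper's argument: both rest on Dynkin's $\pi$-$\lambda$ theorem for the cylinder $\pi$-system $\mc{C}$, combined with minimality of $\lambda(\textbf{B})$. The paper phrases it at the level of classes, concluding $\mc{B}(\Omega)=\lambda(\mc{C})\subseteq\lambda(\textbf{B})$ so that $\lambda(\textbf{B})$ is the whole Borel $\sigma$-algebra, whereas you run the same reasoning inside the agreement class $\mathcal{L}_{\mu,\nu}$, with the extra, harmless check that $\mu(\Omega)=\nu(\Omega)$ via an exhausting sequence of balls.
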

\begin{proof}
Suppose $\mc{C}  \subseteq \lambda({\textbf B})$. Since $\mc{C}$ is a $\pi$-system that generates the Borel sigma algebra, we have $\lambda(\mc{C}) =\mc{B}(\Omega)$. Therefore, if $\mc{C} \subseteq \lambda({\textbf B})$ then $\mc{B}(\Omega) \subseteq \lambda({\textbf B})$.
\end{proof}

The following is an immediate adaptation of  \autoref{cor:ballfail} to product spaces.

\begin{Corollary} \label{cor:ballfailg}
    Let $\Omega$ be a Polish space arising as a countable product of Polish spaces, and let $\rho$ be a complete metric on $\Omega$ that metrizes the product topology. Suppose the ball property fails. Then the following must hold: \begin{enumerate}[label=(\roman*)]
        \item The metric space exhibits charged dyadic shattering;
        \item The metric space is not directionally limited and thus does not satisfy a Besicovitch-type covering property; 
        \item The metric space does not have finite doubling dimension; 
        \item The metric space cannot support a uniform measure; 
         \item The metric space does not isometrically embed into a Hilbert space;
        \item The collection of cylinder sets satisfies $\mc{C} \not\subseteq {\lambda}({\textbf B})$.
    \end{enumerate}
\end{Corollary}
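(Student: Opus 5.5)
Since $(\Omega,\rho)$ is a separable metric space, items (i)--(v) follow at once from \autoref{cor:ballfail}. Item (vi) follows from \autoref{lem:cylininlamb} and \autoref{prop:lambdasepc}. Concretely, I would argue as follows.

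First, I fix the setting. Because each $E_n$ is Polish, the countable product $\Omega=\prod_{n}E_n$ with the product topology is Polish. Hence $(\Omega,\rho)$ is a complete separable metric space, and its Borel $\sigma$-algebra (generated by the $\rho$-topology, which coincides with the product topology) is $\F$. The ball property, the collection ${\bf B}$ of open $\rho$-balls, and $\lambda({\bf B})$ are all taken with respect to $\rho$. The hypothesis of \autoref{cor:ballfail}, namely a separable metric space on which the ball property fails, is then satisfied verbatim. Applying that corollary gives (i)--(v) directly: (i) is \autoref{thm:dyadicallyshatter}, and (ii)--(v) are the consequences already listed there, with (v) resting on \citep{vD26}.

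For (vi) I argue by contraposition. Suppose $\mc{C}\subseteq\lambda({\bf B})$. By \autoref{lem:cylininlamb}, $\lambda({\bf B})$ is then a separating class; in fact its proof gives the stronger statement $\lambda({\bf B})=\mc{B}(\Omega)$, via Dynkin's $\pi$-$\lambda$ theorem applied to the $\pi$-system $\mc{C}$, which generates $\F=\mc{B}(\Omega)$. By \autoref{prop:lambdasepc}, a separating $\lambda({\bf B})$ is equivalent to the ball property holding on $(\Omega,\rho)$. This contradicts the assumption, so $\mc{C}\not\subseteq\lambda({\bf B})$.

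The only point needing care is that every object is computed for the same metric $\rho$ and the same Borel $\sigma$-algebra. The cylinder sets are defined topologically through the coordinate maps $Y_n$, whereas ${\bf B}$ depends on $\rho$. These fit together only because $\rho$ metrizes the product topology, so that $\sigma(\mc{C})=\F=\mc{B}(\Omega,\rho)$. I would state this identification explicitly before invoking \autoref{lem:cylininlamb}. Beyond that, the argument is bookkeeping.
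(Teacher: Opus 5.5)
Your proposal is correct and takes the paper's own route. The paper states this corollary without separate proof, as an immediate adaptation of \autoref{cor:ballfail}: items (i)--(v) follow because $(\Omega,\rho)$ is a separable metric space, and item (vi) follows by contraposition from \autoref{lem:cylininlamb} together with \autoref{prop:lambdasepc}, exactly as you argue, with your explicit check that $\sigma(\mathcal{C})=\mathcal{B}(\Omega,\rho)$ being the bookkeeping the paper leaves implicit.
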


The characterization of measures on a product space via cylinder sets actually provides a  key insight into the ball property, and it is what Davies exploited in his counterexample (discussed below). Consider a collection of Borel probability measures $\{\mu_n\}$ where $\mu_n$ is defined on $(X_n, \mc{B}(X_n))$ with $X_n: = \prod_{k=1}^n E_k$. If this collection satisfies the projective consistency condition, Kolmogorov's Extension Theorem ensures the existence of a unique probability measure $\mu$ on $(\Omega,\mc{B}(\Omega))$ such that $\{\mu_n\}$ are the corresponding finite-dimensional distributions.
   Generally, this argument is proven as follows: the collection of cylinders $\mathcal{C} =\bigcup_{n=1}^\infty \mathcal{C}_n$, where $\mathcal{C}_n$ consists of sets of the form
\[C_n := A \times \prod_{i=n+1}^\infty E_i \quad \text{for } A \in \mathcal{B}(X_n),\]
forms an algebra. Projective consistency allows us to define a finitely additive set function $\mu_0$ on this algebra of cylinders. By the inner regularity of Borel measures (which holds on Polish spaces by Ulam's theorem) and Carath{\'e}odory's extension theorem, $\mu_0$ is extended to a countably additive set function on $(\Omega,\mathcal{B}(\Omega))$ via the cylinder outer measure, which is given by
\[\mu^\star_{\mathcal{C}}(B) = \inf\Big\{\sum_{i=1}^\infty \mu_0(C_i) : B \subseteq \bigcup_{i=1}^\infty C_i, \; C_i \in \mathcal{C}\Big\}.\]
Under the conditions of Kolmogorov's extension theorem, the restriction of $\mu^\star_{\mathcal{C}}$ to the Borel $\sigma$-algebra $\mathcal{B}(\Omega)$ yields the unique countably additive probability measure $\mu$. Therefore, for any Borel set $B$, we have $\mu_{\mathcal{C}}^\star(B)=\mu(B)$. 
In contrast, consider the metric outer measure defined by centered ball covers 
\[\mu_{\textbf{B}}^\star(E) = \sup_{\epsilon>0} \inf\Big\{\sum_{k=1}^\infty \mu(B_k) : E \subseteq \bigcup_{k=1}^\infty B_k, \; B_k \in \textbf{B}_\epsilon(E)\Big\},\]
where $\textbf{B}_\epsilon(E) = \{B(x,r) : x \in E, r<\epsilon\}$ is the collection of open balls centered on the set $E$, and $\mu$ is the given finite Borel measure on $(\Omega,\mathcal{B}(\Omega))$. 

\begin{Remark}
It is important to emphasize here that the metric outer measure $\mu_{\textbf{B}}^\star$ is defined using covers whose centers lie in the set $E$. In standard geometric measure theory on finite-dimensional spaces, metric outer measures are often defined using uncentered balls as the two definitions become equivalent. To transition from an uncentered cover to a centered cover, one typically shifts the centers to points within the set, which requires expanding the radii by a factor of $2$ via the triangle inequality. 

In finite dimensions, the sum of the measures of these expanded balls is controlled either by a doubling property of the measure (i.e., $\mu(B(x, 2r)) \le C \mu(B(x,r))$) or by the Besicovitch Covering Property, which bounds the multiplicity of overlapping balls. However, both the doubling property and the Besicovitch property fail in infinite-dimensional  spaces. Therefore, in infinite-dimensional Polish spaces, the uncentered and centered metric outer measures are not guaranteed to be equivalent. Uncentered covers allow centers to sit arbitrarily far outside of the set $E$, allowing for a tighter cover than  can be obtained via the centered metric outer measure since  $\mu^\star_{\text{uncentered}}(E) \le \mu^\star_{\text{centered}}(E)$. Conceptually analogous to the projective consistency condition required by Kolmogorov's extension theorem,  a consistent notion of metric outer measure is one that remains intact when projecting down to marginals or lifting to infinite products. This consistency breaks when the centers are decoupled from the sets they approximate, making the centered metric outer measure the appropriate geometric object to characterize the ball property.
\end{Remark}

Ulam's theorem guarantees that every finite Borel measure on a Polish space is inner regular with respect to compact subsets. This leads to the following equivalence:
\begin{Proposition}\label{prop:cylbalouter}
Let $\Omega = \prod_{k=1}^\infty E_k$ be a Polish product space equipped with a complete metric $\rho$ metrizing the product topology. Suppose $\mu$ is a Borel probability measure on $(\Omega,\mc{B}(\Omega))$ constructed from finite-dimensional distributions via Kolmogorov's Extension Theorem. Then $(\Omega, \rho)$ has the ball property with respect to $\mu$ if and only if for any compact set $K\subseteq \Omega$, we have 
\[\mu_{\textbf{B}}^\star(K) \le \mu_{\mathcal{C}}^\star(K). \tageq \label{eq:outercb}\]
\end{Proposition}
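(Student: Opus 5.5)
The plan is to route both directions through the standard comparison between the centered metric outer measure and the measure itself, as in the Vitali/Federer theory of metric outer measures. The first step is to fix the meaning of ``the ball property with respect to $\mu$'': every finite Borel measure $\nu$ with $\nu(B)=\mu(B)$ for all $B\in\textbf{B}$ satisfies $\nu=\mu$. By the discussion preceding the statement, $\mu^\star_{\mathcal C}(K)=\mu(K)$ for every Borel set, and in particular for compact $K$. So \eqref{eq:outercb} amounts to $\mu^\star_{\textbf B}(K)\le\mu(K)$.

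The reverse inequality $\mu(K)\le\mu^\star_{\textbf B}(K)$ always holds, by countable subadditivity of $\mu$ over any ball cover. Condition \eqref{eq:outercb} is therefore equivalent to $\mu^\star_{\textbf B}=\mu$ on compacts. By Ulam's theorem this extends to $\mu^\star_{\textbf B}=\mu$ on all Borel sets, using inner regularity together with monotonicity of $\mu^\star_{\textbf B}$ on the inner side. The outer side needs an extra argument: I would show that $\mu^\star_{\textbf B}$ is a metric outer measure, since it is defined via covers of vanishing radius, so that Borel sets are measurable for it. I would then exploit compact inner approximation of complements.

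For sufficiency, let $\nu$ agree with $\mu$ on balls. Then $\nu^\star_{\textbf B}=\mu^\star_{\textbf B}$ identically, because the definition only involves the values on balls. For compact $K$ we get $\nu(K)\le\nu^\star_{\textbf B}(K)=\mu^\star_{\textbf B}(K)\le\mu(K)$. Since $\nu(\Omega)=\mu(\Omega)$ (take large balls, with $\Omega$ exhausted by increasing balls), $\nu$ is a probability measure on a Polish space and hence is tight. Applying the same inequality on complements of open sets gives $\nu\le\mu$ on closed sets by inner regularity. Equal total mass then forces $\nu=\mu$ on open sets, and therefore on $\mathcal B(\Omega)$.

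For necessity, I would argue by contraposition. Suppose $\mu^\star_{\textbf B}(K)>\mu(K)$ for some compact $K$. The aim is to construct $\nu\ne\mu$ agreeing with $\mu$ on balls. The natural candidate comes from the Carathéodory restriction of $\mu^\star_{\textbf B}$, which is a Borel measure because $\mu^\star_{\textbf B}$ is a metric outer measure. The difficulty is making it agree with $\mu$ on balls while differing on $K$. Alternatively, one could use \autoref{lemm:ballsspan}: density of ball indicators in $L^1(\mu)$ would give approximations of $1_K$ by ball combinations, and I would try to convert those into centered small-radius covers with total mass near $\mu(K)$, contradicting the strict inequality.

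This last conversion is the main obstacle. Signed linear combinations of balls do not directly yield covers, and the centered, small-radius restriction is exactly where doubling and Besicovitch control fail in infinite dimensions. My first attempt would be a Vitali-type argument restricted to $K$. Compactness would reduce the task to finitely many centers, and a finite-dimensional cylinder approximation $\mathcal C_n$ of $K$ would then be used to pass between cylinder covers and ball covers.
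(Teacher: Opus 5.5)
Your sufficiency half is correct, and it is essentially all that the paper's appeal to Ulam's theorem delivers. If $\nu$ agrees with $\mu$ on balls, then $\nu^\star_{\textbf{B}}=\mu^\star_{\textbf{B}}$, so $\nu(K)\le\nu^\star_{\textbf{B}}(K)\le\mu(K)$ for every compact $K$. Tightness of $\nu$ then gives $\nu\le\mu$ on \emph{every} Borel set, and applying this to complements together with $\nu(\Omega)=\mu(\Omega)$ gives $\nu=\mu$. Your detour through closed and open sets is unnecessary; as written, ``$\nu\le\mu$ on closed sets plus equal mass'' only yields $\nu\ge\mu$ on open sets. You also never need $\mu^\star_{\textbf{B}}=\mu$ on all Borel sets. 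Be careful with ``monotonicity'' and ``metric outer measure'' here. Admissible covers must be centered in the set being covered, so a cover of $F$ is not admissible for $E\subseteq F$. Hence $\mu^\star_{\textbf{B}}$ is not automatically monotone, let alone a metric outer measure.

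The genuine gap is necessity, which you leave open, and neither suggested route can close it.
\begin{itemize}
\item Showing that the Carath\'eodory restriction of $\mu^\star_{\textbf{B}}$ agrees with $\mu$ on balls is no easier than the claim itself. Under the ball property it would force that restriction to equal $\mu$, so this step is circular.
\item $L^1(\mu)$-density of the span of ball indicators only produces signed combinations of uncentered balls of arbitrary radius. These carry no covering information.
\end{itemize}
More fundamentally, ``balls determine $\mu$'' $\Rightarrow$ ``$\mu^\star_{\textbf{B}}(K)\le\mu(K)$ for all compact $K$'' is a uniqueness-to-Vitali-approximation implication, and it fails in this generality. A separable Hilbert space has the ball property for every Borel probability measure. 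Yet Preiss constructed a Gaussian measure $\gamma$ on $\ell^2$ and a set $A$ with $\gamma(A)>0$ such that $\gamma(A\cap B(x,r))/\gamma(B(x,r))\to 0$ for $\gamma$-a.e.\ $x$. By Egorov's theorem (applied to $\sup_{0<r<1/n}$ of these ratios) and inner regularity, there are a compact $K\subseteq A$ with $\gamma(K)>0$ and an $r_0>0$ such that $\gamma(B(x,r))\ge 2\gamma(K\cap B(x,r))$ for all $x\in K$ and $r<r_0$. Hence every cover of $K$ by balls centered in $K$ with radii below $r_0$ has total mass at least $2\gamma(K)$, so $\gamma^\star_{\textbf{B}}(K)\ge 2\gamma(K)>\gamma(K)$. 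Transport the norm metric along Anderson's homeomorphism $\mathbb{R}^{\mathbb{N}}\cong\ell^2$. This places the example in the proposition's setting: a complete metric on a countable product of Polish spaces that metrizes the product topology.

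So only the ``if'' direction is provable as stated. The converse needs an additional Vitali- or Besicovitch-type covering hypothesis on $\mu$. The paper gives no argument for this direction either: inner regularity yields exactly your sufficiency half. It is nonetheless invoked in the step (i)$\Rightarrow$(ii) of \autoref{thm:ballprod}.
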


 We can relate the cylinder outer measure to the finite-dimensional projection by recalling that any compact set $K \subseteq \Omega$ can be represented as an intersection of a decreasing sequence of cylinders based on the projections. More specifically, $K = \bigcap_{n=1}^\infty \pi_n^{-1}\big(\pi_n(K)\big)$ where each $C_n:=\pi_n^{-1}\big(\pi_n(K)\big)$ is a cylinder set with an $n$-dimensional base. Under the conditions of the above statement, continuity from above yields \[\mu^\star_{\mc{C}}(K)= \mu(K) = \lim_{n\to \infty }\mu(\pi_n^{-1}\big(\pi_n(K)\big))= \lim_{n\to \infty}\mu_n(\pi_n(K)). \tageq \label{eq:cylindlim}\]
This connects the right-hand side of \eqref{eq:outercb} directly to the finite-dimensional measures $\mu_n$. 
To relate the metric outer measure to projective spaces $X_n$, we must formalize their metric geometry. To make this compatible with the metric $\rho$ on $\Omega$, we consider an induced metric on $X_n$ via  canonical embedding. Fix a sequence $e^\star=(e^\star_1, e^\star_2, \ldots) \in \Omega$ and define the embedding $i_n: X_n \to \Omega$ by 
\[
i_n(x_n) =(e_1,\ldots,e_n, e^\star_{n+1},e^\star_{n+2}, \ldots),  
\]
where $x_n= (e_1,\ldots,e_n) \in X_n$. We then equip $X_n$ with the induced metric $d_{X_n}(x_n,y_n):=\rho(i_n(x_n), i_n(y_n))$. Since each $X_n$ is a finite product of Polish spaces, it is Polish. Furthermore $i_n(X_n)= \prod_{k=1}^n E_k \times \{e^\star_{n+1}\} \times \{e^\star_{n+2}\}\times \cdots$ is 
a closed subspace of $\Omega$, 
 implying that the metric $d_{X_n}$ metrizes the product topology on $X_n$. Since $\rho$ is complete so is $d_{X_n}$, making each $(X_n,d_{X_n})$ a well-defined complete separable metric space. We then denote by $\mu^\star_{n,\textbf{B}}$ the metric outer measure on $(X_n,d_{X_n})$ constructed using $\mu_n$.  

For the inequality in \eqref{eq:outercb}, it is not sufficient that the ball property holds on each finite-dimensional projection individually, i.e., that $\mu^\star_{n,{\textbf B}}=\mu_n$ for each $n\in \mb{N}$. To preserve this property under limits, the metric outer approximations must not deteriorate as $n \to \infty$. For this, we must constrain the number of open balls required to achieve a tight enough approximation on the projected compact sets. 
\begin{definition}
    For a compact set $K_n \subseteq X_n$, radius $\epsilon > 0$, and tolerance $\delta > 0$, define the \textit{projection covering number} $M_n(K_n, \epsilon, \delta)$ as the smallest integer $M \in \mb{N}$ such that there exists a finite collection of open balls $\{B_{X_n}(x_j, r_j)\}_{j=1}^{M}$ of radius $r_j \le \epsilon$ with centers $x_j \in K_n$ satisfying 
\begin{align*}
    K_n \subseteq \bigcup_{j=1}^{M} B_{X_n}(x_j, r_j),
\end{align*}
and 
\begin{align*}
    \sum_{j=1}^{M} \mu_n\big(B_{X_n}(x_j, r_j)\big) \le \mu_n(K_n) + \delta.
\end{align*}
If no such integer $M$ exists, then we set $M_n(K_n, \epsilon, \delta) = \infty$.
\end{definition}

The following theorem entirely characterizes the ball property on infinite product spaces.

\begin{thm}\label{thm:ballprod}
Let $\Omega = \prod_{k=1}^\infty E_k$ be a Polish product space equipped with a complete metric $\rho$ metrizing the product topology, and let $\mu$ be a finite Borel measure on $\Omega$. For every $n \in \mb{N}$, let $\mu_n =\mu \circ \pi^{-1}_n$ be the projected measure on $(X_n, d_{X_n})$. 
The following are equivalent:
\begin{enumerate}[label=(\roman*)]
    \item $(\Omega,\rho)$ satisfies the ball property with respect to $\mu$, and for every $n \in \mb{N}$, $(X_n, d_{X_n})$ satisfies the ball property with respect to $\mu_n$.
    \item For every compact set $K \subseteq \Omega$, radius $\epsilon > 0$, and $\delta > 0$, the projection covering numbers are uniformly bounded:
    \begin{equation*}
        \sup_{n \in \mb{N}} M_n(\pi_n(K), \epsilon, \delta) < \infty.
    \end{equation*}
\end{enumerate}
\end{thm}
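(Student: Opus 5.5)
The plan is to prove both directions through \autoref{prop:cylbalouter}, which reduces the ball property on $\Omega$ (with respect to $\mu$) to the inequality $\mu^\star_{\textbf B}(K)\le \mu^\star_{\mc C}(K)=\mu(K)$ for compact $K$. The identity \eqref{eq:cylindlim}, $\mu(K)=\lim_n \mu_n(\pi_n(K))$, then links this to the projections. The finite-dimensional statement is the analogue of \autoref{prop:cylbalouter} on each $(X_n,d_{X_n})$. There the ball property with respect to $\mu_n$ means $\mu^\star_{n,\textbf B}(K_n)=\mu_n(K_n)$ for compact $K_n$. By compactness, any countable centered cover can be reduced to a finite subcover while keeping the sum of measures at most $\mu_n(K_n)+\delta$. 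So on a single $X_n$, finiteness of $M_n(K_n,\epsilon,\delta)$ for all $\epsilon,\delta$ is equivalent to the ball property on $X_n$. The theorem is therefore really about \emph{uniformity in $n$}.

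For (ii)$\Rightarrow$(i), fix a compact $K$ and $\epsilon,\delta>0$, and let $M:=\sup_n M_n(\pi_n(K),\epsilon,\delta)<\infty$. Finiteness of each $M_n$ already gives the ball property on every $X_n$ by the remark above. For each $n$, choose at most $M$ centers $x^n_j\in\pi_n(K)$ with radii $r^n_j\le\epsilon$ that cover $\pi_n(K)$ and satisfy $\sum_j\mu_n(B_{X_n}(x^n_j,r^n_j))\le\mu_n(\pi_n(K))+\delta$. Lift each center to a point $y^n_j\in K$ with $\pi_n(y^n_j)=x^n_j$. Since the number of balls is bounded by $M$ and $K$ is compact, we can pass to a subsequence along which $y^n_j\to y_j\in K$ and $r^n_j\to r_j$ for each $j\le M$. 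Next, the metric $\rho$ metrizes the product topology, so $\sup_{\omega\in K}\rho(\omega,i_n(\pi_n\omega))\to0$ by compactness. Hence balls in $X_n$ pulled back by $\pi_n$ are, on $K$, uniformly close to $\rho$-balls in $\Omega$ with the same radius up to a vanishing error $\tau_n$. Enlarging the radii slightly, $\{B(y_j,r_j+\gamma)\}_{j\le M}$ covers $K$. Portmanteau-type upper semicontinuity on closed balls, combined with $\mu_n=\mu\circ\pi_n^{-1}$, then gives
\[
\sum_{j\le M}\mu(B(y_j,r_j+\gamma))\le \lim_n \mu_n(\pi_n(K))+\delta+o(1)=\mu(K)+\delta+o(1).
\]
This yields $\mu^\star_{\textbf B}(K)\le\mu(K)$ up to the $\epsilon$-localisation: the radii are at most $\epsilon+\gamma$, which is harmless since $\epsilon$ is arbitrary. \autoref{prop:cylbalouter} then gives the ball property on $\Omega$.

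For (i)$\Rightarrow$(ii), argue by contradiction. Suppose that for some $K,\epsilon,\delta$ we had $M_n(\pi_n(K),\epsilon,\delta)\to\infty$ along a subsequence. By (i) and \autoref{prop:cylbalouter}, there is a finite centered cover of $K$ by $\rho$-balls $B(y_j,r_j)$ with $y_j\in K$, $r_j<\epsilon/2$, of total mass at most $\mu(K)+\delta/2$. Pushing it forward, the balls $B_{X_n}(\pi_n y_j,r_j+\tau_n)$ cover $\pi_n(K)$ for large $n$. Their masses satisfy $\mu_n(B_{X_n}(\pi_ny_j,r_j+\tau_n))\le\mu(B(y_j,r_j+2\tau_n))$, which tends to $\mu(\bar B(y_j,r_j))$. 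Choosing the radii $r_j$ among the cocountably many values with $\mu(\partial B(y_j,r_j))=0$ keeps the total below $\mu_n(\pi_n(K))+\delta$ for large $n$. This gives a uniform bound by the fixed number of balls, and the finitely many small $n$ are handled by the finite-dimensional ball property.

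I expect the main obstacle to be the comparison between $d_{X_n}$-balls and $\pi_n$-preimages of $\rho$-balls. The embedding $i_n$ uses the fixed tail $e^\star$, so $\pi_n^{-1}(B_{X_n}(x,r))$ and $B(i_n x,r)$ differ. Controlling this requires the uniform convergence $\rho(\omega,i_n\pi_n\omega)\to0$ on compacta, and I will handle it by restricting all comparisons to $K$ or a compact neighbourhood of it. The second delicate point is the boundary-mass issue. I plan to deal with it by perturbing the radii to continuity radii, which is possible because only countably many radii carry positive boundary mass for each center.
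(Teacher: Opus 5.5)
Your overall plan matches the paper's: limits of uniformly bounded covers for (ii)$\Rightarrow$(i), and projection of a finite $\rho$-cover for (i)$\Rightarrow$(ii). Lifting the centres directly into $K$ is a clean substitute for the paper's Hausdorff-convergence argument. However, the decisive limit step in (ii)$\Rightarrow$(i) fails as written.

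You need the $\mu$-mass of the limiting balls to be at most $\liminf_n \mu(\pi_n^{-1}B_{X_n}(x^n_j,r^n_j))$. That is a \emph{lower} semicontinuity statement; upper semicontinuity on closed sets gives the reverse inequality. What is true is $B(y_j,r_j)\subseteq\liminf_n\pi_n^{-1}B_{X_n}(x^n_j,r^n_j)$, using $i_n\pi_n\omega\to\omega$ for every $\omega$, $i_n\pi_n y^n_j\to y_j$ and $r^n_j\to r_j$. Fatou then gives $\mu(B(y_j,r_j))\le\liminf_n\mu_n(B_{X_n}(x^n_j,r^n_j))$. This is the paper's step via $B_\Omega(\omega_j,r_j-\eta)\subseteq B_\Omega(\omega_{n,j},r_{n,j})$ and the Portmanteau theorem for open sets.

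But this only controls the \emph{open} limit balls. Your limiting cover of $K$ is by $\bar B(y_j,r_j)$, and $\mu(B(y_j,r_j+\gamma))\downarrow\mu(\bar B(y_j,r_j))$ as $\gamma\downarrow 0$. So your display is off by the sphere masses $\mu(\{\rho(\cdot,y_j)=r_j\})$, and nothing in (ii) makes these vanish: sphere points need not lie in $\pi_n^{-1}B_{X_n}(x^n_j,r^n_j)$ for any $n$.

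Passing to continuity radii does not help in this direction. Here $y_j$ and $r_j$ are limits of whatever covers (ii) supplies: shrinking $r_j$ destroys the cover of $K$, and enlarging it brings in exactly this sphere mass. The paper handles this point by assuming ``without loss of generality'' that $\mu(\partial B_\Omega(\omega_j,r_j))=0$. Since $r_j$ is not a free parameter, that assumption also needs justification, and your proposal does not supply one.

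In (i)$\Rightarrow$(ii), your $\tau_n$-enlargement checks something the paper leaves implicit: that the projected balls actually cover $\pi_n(K)$. The enlargement must be $2\tau_n$, since both $\omega$ and the centre are moved. The mass bound $\mu_n(B_{X_n}(\pi_n y_j,r_j+\tau_n))\le\mu(B(y_j,r_j+2\tau_n))$, however, is unjustified. The left side is $\mu$ of the whole preimage $\pi_n^{-1}B_{X_n}(\pi_n y_j,r_j+\tau_n)$. That preimage contains points far from $K$, where $\rho(\omega,i_n\pi_n\omega)$ is not controlled by $\tau_n=\sup_{K}\rho(\cdot,i_n\pi_n\cdot)$. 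Restricting to a compact neighbourhood of $K$ is not available in spaces such as $\mathbb{R}^{\mathbb{N}}$.

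Use the paper's pointwise argument instead. Since $i_n\pi_n\omega\to\omega$ for each $\omega$ and $r_j+2\tau_n\to r_j$, you get $\limsup_n\pi_n^{-1}B_{X_n}(\pi_n y_j,r_j+2\tau_n)\subseteq\bar B(y_j,r_j)$. Reverse Fatou then gives $\limsup_n\mu_n(B_{X_n}(\pi_n y_j,r_j+2\tau_n))\le\mu(\bar B(y_j,r_j))$. Truncating by a compact set of nearly full $\mu$-measure also works.

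In this direction, choosing continuity radii is legitimate because the cover is yours. Set $m=\min_{\omega\in K}\max_j(r_j-\rho(\omega,y_j))>0$. Replacing each $r_j$ by a continuity radius $r_j'>0$ with $r_j'\in(r_j-m,r_j)$ keeps $K$ covered without increasing any ball's mass.
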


The conditions of \autoref{thm:ballprod} give us a precise characterization of when a finite Borel measure on a product space---constructed via Kolmogorov's Extension Theorem---is fully determined by its values on balls. However, condition (ii) reveals that this inherently depends on the intricate interplay between the measure and the corresponding metric geometries on the spaces $X_n$, $n\in \mb{N}$. If no uniform bound on the measure-theoretic covering numbers exists, then from \autoref{prop:cylbalouter}, the metric outer approximations of compact sets decouple from the cylinder outer approximations; that is, for some compact set $K \subseteq \Omega$, 
\[
\mu^\star_{\textbf{B}}(K) > \mu^\star_{\mc{C}}(K) = \mu(K). \tageq\label{eq:muBallCyl}
\]
This decoupling demonstrates that the open balls fail to approximate compact sets without excess measure, implying that the linear span of the indicator functions of open balls is not dense in  $L^1(\Omega, \mc{B}(\Omega), \mu)$. Tying this to the functional-analytic characterization of the ball property as given in \autoref{lemm:ballsspan}, this guarantees in effect the existence of a non-zero signed Borel measure $\eta$ on $(\Omega, \mc{B}(\Omega))$ that annihilates all balls. We show in the next section how such a non-zero signed Borel measure is explicitly constructed on a product metric space. 

\begin{proof}[Proof of \autoref{thm:ballprod}]

We first show the implication $\text{(ii)} \implies \text{(i)}$.  Fix a compact set $K \subseteq \Omega$, an $\epsilon > 0$, and a $\delta > 0$. Let $K_n = \pi_n(K) \subseteq X_n$. By  (ii), there exists a uniform bound $M \in \mb{N}$ independent of $n$ such that we can cover $\pi_n(K)$ using at most $M_n \le M$ open balls in $X_n$, i.e.,
\[
\pi_n(K) \subseteq \bigcup_{j=1}^{M_n} B_{X_n}(x_{n,j}, r_{n,j}) \tageq\label{eq:Kncover}
\]
where $r_{n,j} \le\epsilon$, and  $x_{n,j} \in \pi_n(K)$, such that
\[
\sum_{j=1}^{M_n} \mu_n\big(B_{X_n}(x_{n,j}, r_{n,j})\big) \le \mu_n(\pi_n(K)) + \delta.
\]
Since this holds for any $\delta$, taking $\delta \downarrow 0$ shows that $\mu^\star_{n,\textbf{B}}(\pi_n(K)) \le \mu_n(\pi_n(K))$. In fact, this holds for every compact set $\mc{K}_n \subseteq X_n$ as we can lift this to a compact set in $\Omega$ via the canonical embedding $i_n$. 
Hence, $\mu^\star_{n,\textbf{B}}(\mc{K}_n) = \mu_n(\mc{K}_n)$ for any compact set $\mc{K}_n\subseteq X_n$ and thus the Polish space $(X_n,d_{X_n})$ satisfies the ball property with respect to $\mu_n$ for all $n \in \mb{N}$.

To show that the measure $\mu$ on $(\Omega, \mc{B}(\Omega))$ is uniquely determined by its values on open balls, it suffices again (since $\Omega$ is Polish) to show that for any compact set $K \subseteq \Omega$ and any $\delta > 0$, we have $\mu_{\textbf{B}}^\star(K) \le \mu(K) + \delta$. To this end, we lift the cover  $\{B_{X_n}(x_{n,j}, r_{n,j})\}_{j=1}^{M_n}$ to $\Omega$ via the canonical embedding $i_n$. Define the centers $\omega_{n,j} = i_n(x_{n,j}) \in \Omega$. Since $i_n: X_n \to \Omega$ is an isometric embedding by the definition of $d_{X_n}$, the preimage of an open ball in $\Omega$ under $i_n$ is exactly the corresponding open ball in $X_n$:
\[
i_n^{-1}\big(B_\Omega(\omega_{n,j}, r_{n,j})\big) = B_{X_n}(x_{n,j}, r_{n,j}). \tageq\label{eq:inomegatoxn}
\]
Indeed, if $y_n \in B_{X_n}(x_{n,j}, r_{n,j})$ then $d_{X_n}(x_{n,j},y_n)=\rho(\omega_{n,j},i_n(y_n)) < r_{n,j}$ so that $i_n(y_n) \in B_\Omega(\omega_{n,j}, r_{n,j})$. This completely characterizes the preimage because $i_n^{-1}(B) = \{y_n \in X_n: i_n(y_n) \in B\}$. 
Hence, applying $i_n$ to both sides and using \eqref{eq:Kncover}, we get that 
\begin{align*}
i_n(K_n) \subseteq \bigcup_{j=1}^{M_n} i_n(B_{X_n}(x_{n,j}, r_{n,j})) \subseteq \bigcup_{j=1}^{M_n} B_\Omega(\omega_{n,j}, r_{n,j}).
\end{align*}
Furthermore,
\[
(i_n)_*\mu_n(B_\Omega(\omega_{n,j}, r_{n,j})) = \mu_n\big(i_n^{-1} (B_\Omega(\omega_{n,j}, r_{n,j}))\big) = \mu_n\big( B_{X_n}(x_{n,j}, r_{n,j})\big).
\]
Since $\rho$ metrizes the product topology, the sequence of compact sets $i_n(K_n)$ converges to $K$ in the Hausdorff metric as $n \to \infty$. By the definition of the projection covering number, the covers are centered on the sets themselves, meaning $x_{n,j} \in K_n$. Consequently, the lifted centers $\omega_{n,j} = i_n(x_{n,j})$ strictly belong to $i_n(K_n)$. In any metric space, the union of a Hausdorff-convergent sequence of compact sets and its limit forms a compact set \citep[see e.g.,][Lemma 4.6]{Bargetzetc2023}. Therefore, the set $\mathcal{S} = K \cup \bigcup_{n=1}^\infty i_n(K_n)$ is a compact subset of $\Omega$. Since $M_n \in \{1,\ldots, M\}$ for all $n$, the pigeonhole principle yields a subsequence $\{n_k\}_k$ and a constant integer $M^\star\le M$ such that $M_{n_k}=M^\star$ for all $k$. The sequence of tuples $\{(\omega_{n_k,1}, r_{n_k,1}, \ldots, \omega_{n_k,M^\star}, r_{n_k,M^\star})\}_k$ then lives in a compact space $(\mc{S} \times [0,\epsilon])^{M^{\star}}$. 
By sequential compactness, we can pass to a further subsequence such that for each $j \in \{1, \dots, M^\star\}$, we have $(\omega_{n_{k_l},j}, r_{n_{k_l},j}) \to (\omega_j, r_j)$ where $\omega_j \in K$ and $r_j\le  \epsilon$.

 In order to establish weak convergence  $\mu_n\circ i_n^{-1} \Rightarrow \mu$ on $(\Omega, \mc{B}(\Omega))$, denote by $\nu_n=\mu_n\circ i_n^{-1}$. To show that $\nu_n \circ \pi_k^{-1} \Rightarrow \mu \circ \pi_k^{-1}$, for all $k \in \mb{N}$, consider a Borel measurable set $A=\prod_{i=1}^k A_i \in \mc{B}(\prod_{i=1}^k E_i)$ and  observe that
    \begin{align*}
        \nu_n \circ \pi_k^{-1}(A)=\mu_n\Big(\{x_n \in X_n: i_n(x_n) \in \pi_k^{-1}(A)\}\Big).
    \end{align*}
By definition, $i_n(x_n)=(e_1, \ldots, e_n, e^\star_{n+1}, \ldots)$ and thus we obtain for $k>n$,
\begin{align*}   
\nu_n \circ \pi_k^{-1}(A)& =\mu\Big( \bigcap_{i=1}^n \{e \in \Omega: e_i\in A_i\} \cap \bigcap_{i=n+1}^{k} \{e \in \Omega: e^\star_i\in A_i\}\cap  \bigcap_{i=k+1}^\infty \{e \in \Omega: e_i^\star\in E_i\}\Big)\\&=\mu_n(\prod_{i=1}^n A_i) \delta_{(e_{n+1}^\star,\ldots, e^\star_k)}(\prod_{i=n+1}^k A_i),
\end{align*}
whereas for $k\le n$,
\begin{align*}   
\nu_n \circ \pi_k^{-1}(A) =\mu\Big( \bigcap_{i=1}^{k} \{e \in \Omega: e_i\in A_i\} \Big)=\mu \circ \pi_k^{-1}(\prod_{i=1}^k A_i). 
\end{align*}
Hence, because $k$ is fixed it follows that  $\nu_n \circ \pi_k^{-1} \Rightarrow \mu \circ \pi_k^{-1}$, for all $k \in \mb{N}$. Denote by $\nu_{n,\{k\}}= \nu_n \circ \pi_{\{k\}}^{-1}$ the $k$th one-dimensional marginal of $\nu_n$. By construction for $n<k$, $\nu_{n,\{k\}}= \delta_{e^\star_k}$, while for $n \ge k$, $\nu_{n,\{k\}}=\mu_{\{k\}}$ by projective consistency. Hence for fixed $k$, the sequence $\{ \nu_{n,\{k\}}\}_{n \in \mb{N}}$ on $(E_k, \mc{B}(E_k))$ entirely  consists of the measures $\delta_{e^\star_k}$ and $\mu_{\{k\}}$. Without loss of generality, assume that $\mu$ is a Borel probability measure on the Polish space $(\Omega, \mc{B}(\Omega))$ (otherwise, we can simply normalize by $\mu(\Omega)$), its $k$th one dimensional marginal $\mu_{\{k\}}$ is a tight probability measure on $(E_k, \mc{B}(E_k))$. Consequently there exists a compact set $\mc{K}_k \subseteq E_k$ such that 
\[\mu_{\{k\}}(\mc{K}_k) >1-\frac{\epsilon}{2^k}.\] Hence, 
 define the set $V=\prod_{k=1}^\infty V_k$ where $V_k=\mc{K}_k \cup \{e^\star_k\}$. Then $V \subseteq \Omega$ is compact,
 and by a union bound
 \begin{align*}
    \nu_n(V^\complement)&=\mu\Big(\big\{e \in \Omega: i_n(\pi_n(e)) \in  \bigcup_{k=1}^\infty \{y \in \Omega: y_k \notin V_k\}\big\}\Big)
    \\&=\mu\Big(\bigcup_{k=1}^\infty\big\{e \in \Omega: \text{$k$th coordinate of $i_n(\pi_n(e))$ does not lie in  $V_k$}\big\}\Big)
    \\& \le \sum_{k=1}^\infty \mu\Big(\big\{e\in \Omega: \text{$k$th coordinate of $i_n(\pi_n(e))$ does not lie in  $V_k$}\}\Big). 
 \end{align*}
 Observe that if $k>n$, then the $k$th coordinate of $i_n(\pi_n(e))$ is $e^\star_k$, whereas if $k\le n$, it is $e_k$. Since $\mu(\{e \in \Omega: e_k \notin V_k\})=\mu(\{e \in \Omega: e_k \notin \mc{K}_k\})\le \epsilon/2^k$ and $\mu(\{e \in \Omega: e_k^\star \notin V_k\})=0$, we obtain that 
 \[
 \nu_n(V^\complement) \le \sum_{k=1}^\infty \frac{\epsilon}{2^k} = \epsilon. 
\]
Consequently, $
\inf_{n \in \mb{N}} \nu_n(V) >1-\epsilon$, showing the sequence $\{\nu_n\}$ on $(\Omega, \mc{B}(\Omega))$ is uniformly tight. 
This establishes the weak convergence $(i_n)_*\mu_n \Rightarrow \mu$ on $(\Omega, \mc{B}(\Omega))$. 

Returning to the subsequence of indices $\{n_{k_l}\}_{l \in \mb{N}}$ from the sequential compactness argument, observe that $d_H(i_{n_{k_l}}(K_{n_{k_l}}), K) \to 0$ as $l \to \infty$ implies that $K \subseteq  \bigcup_{j=1}^{M^\star} \bar{B}_\Omega(\omega_j, r_j)$ where $\bar{B}(\omega_j, r_j)$ denotes the closed ball centered at $\omega_j$ with  radius $r_j$. Thus, for any $\epsilon^\prime>0$, the open balls $\{{B}_\Omega(\omega_j, r_j+\epsilon^\prime)\}_{j=1}^{M^\star}$ yield a valid open cover. Since $\mu$ is finite, the boundary $\partial B_{\Omega}(\omega_j,r)$ has positive measure for at most countably many radii $r>0$, so we may assume  without loss of generality that $\mu(\partial B_{\Omega}(\omega_j,r_j))=0$ for all $1 \le j \le M^\star$. By definition of the metric outer measure and continuity of measure from above, we obtain
\[
\mu_{\textbf{B}}^\star(K) \le \lim_{\epsilon^\prime \downarrow 0}\sum_{j=1}^{M^\star} \mu\big(B_\Omega(\omega_j, r_j+\epsilon^\prime)\big)=\sum_{j=1}^{M^\star} \mu\big(\bar{B}_\Omega(\omega_j, r_j)\big) =\sum_{j=1}^{M^\star} \mu\big({B}_\Omega(\omega_j, r_j)\big).
\]
Now, for any $\eta > 0$, choose $l_0$ sufficiently large such that for all $l \ge l_0$, and all $1 \le j \le M^\star$, we have the inclusion $B_\Omega(\omega_j, r_j - \eta) \subseteq B_\Omega(\omega_{n_{k_l},j}, r_{n_{k_l},j})$. Applying continuity of measure from below, and the Portmanteau theorem for open sets on the lower limit, we obtain as a bound on the outer measure of $K$:
\begin{align*}
    \mu_{\textbf{B}}^\star(K) &\le \sum_{j=1}^{M^\star} \mu\big(B_\Omega(\omega_j, r_j)\big) \\
    &= \lim_{\eta \downarrow 0}  \sum_{j=1}^{M^\star} \mu\big(B_\Omega(\omega_j, r_j - \eta)\big) \\
    &\le   \lim_{\eta \downarrow 0} \sum_{j=1}^{M^\star}  \liminf_{l \to \infty}\nu_{n_{k_l}}\big(B_\Omega(\omega_j, r_j - \eta)\big) \\
     &\le  \lim_{\eta \downarrow 0} \sum_{j=1}^{M^\star}   \sup_{l\ge 1} \inf_{m \ge l}\nu_{n_{k_m}}\big(B_\Omega(\omega_j, r_j - \eta)\big) \\
      &\le  \lim_{\eta \downarrow 0} \sum_{j=1}^{M^\star}   \sup_{l\ge l_0} \inf_{m \ge l}\nu_{n_{k_m}}\big(B_\Omega(\omega_{n_{k_m},j}, r_{n_{k_m},j})\big) \\
    &\le \liminf_{l \to \infty} \sum_{j=1}^{M^\star} \mu_{n_{k_l}} \big(i_{n_{k_l}}^{-1}(B_\Omega(\omega_{n_{k_l},j}, r_{n_{k_l},j}))\big) \\
    &\le \limsup_{l \to \infty} \Big( \mu_{n_{k_l}}(K_{n_{k_l}}) + \delta \Big),
\end{align*}
where we used \eqref{eq:inomegatoxn}. By equation \eqref{eq:cylindlim}, $\lim_{n\to\infty} \mu_n(K_n) = \mu(K)$, and therefore 
\[
\mu_{\textbf{B}}^\star(K) \le \mu(K) + \delta.
\]
Since $\delta > 0$ was arbitrary, taking $\delta \downarrow 0$ yields $\mu_{\textbf{B}}^\star(K) \le \mu(K)$, completing the proof of the implication $\text{(ii)} \implies \text{(i)}$.

Now, assume (i) holds. We will show that (ii) must hold. Fix a compact set $K \subseteq \Omega$, $\epsilon>0$ and $\delta>0$. Since the ball property holds,  $\mu_{\textbf{B}}^\star(K) =\mu(K)$. Hence, by definition of the centered metric outer measure and compactness of $K$ we can find a finite collection of balls $\{B_\Omega(x_j,r_j)\}_{j=1}^M$ with radii $r_j \le \epsilon$ and centers $x_j \in K$ that covers $K$ such that 
\[
\sum_{j=1}^M \mu(B_\Omega(x_j,r_j)) \le \mu(K)+\delta/2.
\]
For each fixed $1 \le j \le M$, consider then the sequence of cylinders of the form $C_{n,j} = \pi_n^{-1}\big(B_{X_n}(\pi_n(x_j),r_j)\big)$.  Note that $y \in C_{n,j} = \{z\in \Omega: \pi_n(z) \in \big(B_{X_n}(\pi_n(x_j),r_j)\big)\}$, which implies that 
\[
d_{X_n}(\pi_n(y), \pi_n(x_j)) = \rho(i_n(\pi_n(y)), i_n(\pi_n(x_j)))< r_j.
\]
Therefore, if $y \in \limsup_{n\to \infty} C_{n,j}$, we can find a strictly increasing subsequence $\{n_k\}_{k \in \mb{N}}$ such that $y \in C_{n_k,j}$ for all $k \in \mb{N}$. By continuity of $\rho$ with respect to the product topology and because $\rho(i_n(\pi_n(y)),y) \to 0$ as $n \to \infty$, we find
\[
\lim_{k \to \infty}\rho(i_{n_k}(\pi_{n_k}(y)), i_{n_k}(\pi_{n_k}(x_j)))=\rho(y,x_j).
\]
Since all terms are strictly  bounded from above by $r_j$, it follows that $\rho(y,x_j)\le r_j$. Consequently, $y \in \limsup_{n\to \infty} C_{n,j} \subseteq \bar{B}_{\Omega}(x_j, r_j)$ where $\bar{B}_{\Omega}(x_j, r_j)$ denotes the closed ball with radius $r_j$ centered at $x_j$. Since $\mu$ is finite, we can pick the radii $\{r_j\}_{j=1}^M$ such that $\mu(\partial B_{\Omega}(x_j,r_j))=0$. Hence, reverse Fatou's lemma for sets together with monotonicity of measure  yield
\[
\limsup_{n\to \infty} \mu(C_{n,j}) \le \mu(B_{\Omega}(x_j,r_j)).
\]
Summing over $j = 1, \dots, M$ and using $\mu_n(B_{X_n}(\pi_n(x_j),r_j)) = \mu(C_{n,j})$, we have
\[
\limsup_{n \to \infty } \sum_{j=1}^M \mu_n (B_{X_n}(\pi_n(x_j),r_j))\le  \sum_{j=1}^M \mu(B_\Omega(x_j,r_j)) \le \mu(K)+\delta/2.
\]
By \eqref{eq:cylindlim}, $\lim_{n\to \infty}\mu(\pi_n^{-1}(\pi_n(K)))=\mu(K)$, and thus there exists an integer $N$ such that for all $n \ge N$
\[
\sum_{j=1}^M \mu_n (B_{X_n}(\pi_n(x_j),r_j))\le  \mu_n(\pi_n(K))+\delta.
\]
implying that $M_n(\pi_n(K), \epsilon, \delta)\le M$ for all $n \ge N$. By assumption, $(X_n, d_{X_n})$ satisfies the ball property with respect to $\mu_n$ for every $n < N$, ensuring each $M_n(\pi_n(K), \epsilon, \delta)$ is finite. Setting $M_{\max} = \max\{M_1(\pi_1(K),\epsilon,\delta), \ldots, M_{N-1}(\pi_{N-1}(K),\epsilon,\delta), M\}$, we find 
\[
\sup_{n \in \mb{N}} M_n(\pi_n(K), \epsilon, \delta) \le  M_{\max}<\infty,
\]
which completes the proof.
\end{proof}

\subsection{The pathology of Davies' construction}

Given the structural requirements outlined in \autoref{cor:ballfail} for the ball property to fail, it is natural to ask whether the class of such spaces is non-empty. Davies \cite{davies1971} provided a definitive affirmative answer by constructing a canonical counterexample. Subsequent literature examined the underlying structural properties of his space; notably, Studen{\'y} \cite{studeny1983} (see also Darst \cite{Darst1973}) demonstrated that Davies' space is homeomorphic to the Cantor set and can be naturally embedded into a compact metric group. In what follows, we aim to highlight the exact mechanics behind his construction in light of our previous results, especially \autoref{thm:ballprod}. 
Davies constructs a product space $\Omega = \prod_{n=1}^\infty E_n$, where each coordinate space is defined as $E_n = \{(i,j): 1 \le i \le N_n, 0 \le j \le N_n\}$. The integer $N_n$ controls the local complexity and is specified later to ensure the counterexample works. The set $E_n$ is partitioned into two types of nodes:
    \begin{itemize}
        \item[-] central nodes: $V_C = \{(i,0) \mid 1 \le i \le N_n\}$, 
        \item[-] peripheral nodes: $V_P = \{(i,j) \mid 1 \le i \le N_n, 1 \le j \le N_n\}$. 
    \end{itemize}
    The geometry of $E_n$ is determined by an undirected graph $G_n$, constructed such that the central nodes $V_C$ form a fully connected clique, and each peripheral node $(i,j) \in V_P$ is connected exclusively to its corresponding central node $(i,0)$.
     Davies builds a metric on this product space of sequences $\Omega$ that metrizes the product topology and  depends on a `stopping time'  $\tau(x,y)=\inf\{n : x_n \neq y_n\}$ where $x_n$ and $y_n$ denote, respectively, the $n$th coordinates of $x,y \in \Omega$. It is defined as
    \begin{align*}
        \rho(x,y) =\begin{cases}
        0 & \text{if } x=y, \\ 
        \rho_{\tau}(x_{\tau},y_{\tau}) & \text{if } x_n = y_n \text{ for } n<\tau \text{ and } x_{\tau}\neq y_{\tau},
    \end{cases}
    \end{align*}
    where $\rho_n(x_n, y_n)$ is a custom metric on the coordinate space $E_n$. Note that this setting is perfectly compatible with the general framework established around \autoref{thm:ballprod}. In particular, recall that the metric $d_{X_n}$ on the finite product space $X_n= \prod_{i=1}^n E_i$ is defined by pulling back the metric $\rho$ from $\Omega$ via the canonical embedding $d_{X_n}(w,z) = \rho(i_n(w),i_n(z))$, where $w,z \in X_n$. Since $i_n(w)$ and $i_n(z)$ have identical tails, $\tau(i_n(w),i_n(z)) \le n$, restricting Davies' metric exactly to the first $n$ coordinates without metric distortion. Indeed, if $w=z$, then $\tau(i_n(w),i_n(z)) = \infty$ and $d_{X_n}(w,z)=0$, whereas if $w \neq z$ then $\tau(i_n(w),i_n(z)) \le n$ and $d_{X_n}(w,z) =\rho_{\tau}(w_\tau,z_\tau)$.  
    
    The custom metric on $E_n$ is then defined using the graph $G_n$ as:
    \begin{align*}
    \rho_n(x_n,y_n) =\begin{cases}  0 & \text{if $x_n=y_n$,}\\
        \frac{1}{2^{n}} &\text{if $x_n$ and $y_n$ are connected in $G_n$}, \\ 
        \frac{1}{2^{n-1}} &\text{otherwise.}
    \end{cases}
    \end{align*}
    The cylinder sets are of the form
    \[
    C_k(x) = \{ y \in \Omega: y_1=x_1, \ldots, y_k=x_k\}.
    \]
Now, consider the structure of a closed ball $\bar{B}(x,r)$ for a radius $r=1/2^k$. By the definition of $\tau$ and $\rho$, a point $y$ belongs to $\bar{B}(x, 1/2^k)$ if and only if $y$ agrees with $x$ on the first $k-1$ coordinates, and their $k$-th coordinates are either identical or connected in $G_k$. Consequently, we find
    \begin{align*}
        \bar{B}(x,1/2^k) &= \{y \in \Omega: y_1=x_1, \ldots, y_{k-1}=x_{k-1}, \text{ and } (y_k = x_k \text{ or } y_k \sim x_k \text{ in } G_k)\} \\
        &= C_k(x) \cup F_k(x), \tageq\label{eq:balliscylplusfringe}
    \end{align*}
    where $F_k(x)$ is a non-empty ``fringe'' defined by
    \[
        F_k(x) = \{y \in \Omega: y_1=x_1, \ldots, y_{k-1}=x_{k-1}, \text{ and } y_k \sim x_k \text{ in } G_k \text{ with } y_k \neq x_k\}.
    \]
 Thus, the ball $\bar{B}(x, 1/2^k)$ is a strict superset of $C_k(x)$, but a strict subset of $C_{k-1}(x)$. 
To construct the counterexample, Davies first builds  finite Borel measures that are equal on closed balls but differ on the cylinder sets, and then chooses $N_n$ to ensure that these can be extended to corresponding finite Borel measures $\mu_1$ and $\mu_2$ on $(\Omega, \mc{B}(\Omega))$ by verifying  Kolmogorov's consistency condition.  To complete the counterexample, two distinct probability measures $\mu$ and $\nu$ are then constructed on $\Omega \cup \Omega^\prime$ where $\Omega^\prime$ is a disjoint copy of $\Omega$ with the roles of $\mu_1$ and $\mu_2$ interchanged.

Davies uses an alternating parity-based weight assignment; if the number of central nodes in the cylinder $C_n(x)$ is even, he sets $\mu_1(C_n(x))=\alpha_n$ and $\mu_2(C_n(x))= \beta_n$. If the number of central nodes is odd, however, he reverses the weights, i.e., $\mu_1(C_n(x))=\beta_n$ and $\mu_2(C_n(x))= \alpha_n$, where $\alpha_n >\beta_n >0$. To uniquely extend these to valid finite Borel measures on the product space, the mass of any cylinder at level $n$  must equal the sum of the masses of its children at level $n+1$. By construction, the space $E_{n+1}$ consists of $N_{n+1}$ central nodes and $N^2_{n+1}$ peripheral nodes. If there is an even parity at level $n$, then $C_n(x)$ takes weight $\alpha_n$ under $\mu_1$, and we find 
    \[
    \alpha_n = N_{n+1}\beta_{n+1} + N^2_{n+1} \alpha_{n+1}, 
    \]
    since transitioning to a central node flips the parity, whereas transitioning to a peripheral node leaves the parity unchanged. Similarly, if it has odd parity at level $n$, we obtain
    \[
    \beta_n = N_{n+1}\alpha_{n+1} + N^2_{n+1} \beta_{n+1}.
    \]
    This iterative construction, together with the constraint that $\alpha_n >\beta_n >0$,  requires choosing $N_n > R_{n-1}:=\alpha_{n-1}/\beta_{n-1}$, yielding $R_n= \frac{N_n R_{n-1} -1}{N_n-R_{n-1}}$.

\begin{Proposition}\label{prop:coverdavies}
    In Davies' construction the covering number grows exponentially in $1/r$ and doubly exponentially in $n$. 
\end{Proposition}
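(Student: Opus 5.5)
The strategy is to reduce everything to a single growth estimate for the local complexity parameters $N_n$, and to translate scale $r$ into level $k$ via $r\asymp 2^{-k}$. Once $N_m$ is shown to grow doubly exponentially in $m$, the number of cylinders of level $k$ is doubly exponential in $k$. Since $k\approx\log_2(1/r)$, a quantity doubly exponential in $k$ is of order $\exp(c\,2^{k})=\exp(c'/r)$, i.e. exponential in $1/r$. The two claims of \autoref{prop:coverdavies} are therefore the same estimate, read once in $n$ and once in $r$.

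\emph{Step 1 (a ball lives inside one cylinder, giving a lower bound).} Fix $r$ with $2^{-k}\le r<2^{-(k-1)}$. From the description of $\rho$ via the stopping time $\tau$, $\rho(x,y)<2^{-(k-1)}$ forces $\tau(x,y)\ge k$. Hence every ball $B(x,r)$ is contained in the level-$(k-1)$ cylinder $C_{k-1}(x)$. By \eqref{eq:balliscylplusfringe}, inside that cylinder it meets only the children $y_k$ with $y_k=x_k$ or $y_k\sim x_k$ in $G_k$.
\begin{itemize}
\item If $x_k$ is peripheral, the ball meets at most two of the $N_k+N_k^2$ children.
\item If $x_k$ is central, it meets the $N_k$ central children together with the $N_k$ peripheral children of $x_k$.
\end{itemize}
In either case, covering the $N_k^2$ peripheral children of a single level-$(k-1)$ cylinder needs at least $N_k$ balls. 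This holds for $\Omega$, for $X_n$ with $n\ge k$ (by the isometric restriction noted after the definition of $d_{X_n}$), and for the compact sets $\pi_n(\Omega)=X_n$. Counting over all level-$(k-1)$ cylinders, the covering number at radius $r$ is at least
\[
N_k\prod_{m=1}^{k-1}(N_m+N_m^2)\;\ge\;\prod_{m=1}^{k}N_m.
\]
A matching upper bound of the same form comes from centering one ball of radius $2^{-k}$ in each level-$k$ cylinder. So the covering number at scale $2^{-k}$ is $\prod_{m\le k}N_m^{\Theta(1)}$, and the same product with $k=n$ controls the finest scale needed on $X_n$.

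\emph{Step 2 (doubly exponential growth of $N_n$; the main obstacle).} Take the recursion $R_n=(N_nR_{n-1}-1)/(N_n-R_{n-1})$ with the constraint $N_n>R_{n-1}$.
\begin{itemize}
\item First, $R_n>R_{n-1}$ is equivalent to $R_{n-1}^2>1$, so $(R_n)$ is increasing and $N_n>R_{n-1}\ge R_1>1$.
\item This alone only gives $N_n$ bounded below by a constant. The real content must come from the additional requirement in Davies' construction that $\mu_1$ and $\mu_2$ agree on closed balls $C_k(x)\cup F_k(x)$.
\end{itemize}
For the ball-agreement condition, I would first write it out explicitly. The cylinder $C_k(x)$ carries $\alpha_k$ or $\beta_k$ according to parity. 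The fringe consists of either $N_k-1$ central siblings plus $N_k$ peripheral ones, or a single central node. Equating the two totals gives a linear relation between $\alpha_k,\beta_k$ and $N_k$.

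Combining that relation with the consistency identities $\alpha_{n-1}=N_n\beta_n+N_n^2\alpha_n$ and $\beta_{n-1}=N_n\alpha_n+N_n^2\beta_n$, I aim to show that $N_n$ is pinned to within a bounded factor of $R_{n-1}$, i.e. $N_n\le R_{n-1}+O(1)$. Substituting this into the recursion gives $R_n\ge N_nR_{n-1}-1\ge R_{n-1}^2-1$. Iterating yields $R_n\ge c^{2^n}$ for some $c>1$, and hence $N_n\ge c^{2^{n-1}}$. If the ball condition does not pin $N_n$ this tightly, the fallback is to record that Davies' admissible (minimal) choice $N_n=\lfloor R_{n-1}\rfloor+1$ gives exactly this squaring recursion.

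\emph{Step 3 (conclusion).} Insert $N_m\ge c^{2^{m-1}}$ into Step 1:
\[
\prod_{m\le k}N_m\;\ge\;c^{2^{k}-1}.
\]
With $k=n$ this is doubly exponential in $n$. With $2^{k}\asymp 1/r$ it is $\exp(c''/r)$, i.e. exponential in $1/r$. In the language of \autoref{thm:ballprod}, no finite $\epsilon$-cover of $\pi_n(K)$ uses a number of balls bounded independently of $n$, which is what the proposition asserts.
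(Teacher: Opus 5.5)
Your Steps 1 and 3 are the paper's argument. The paper counts the $\prod_{i<n}|E_i|=\prod_{i<n}N_i(N_i+1)$ level-$(n-1)$ histories and observes that the $N_n$ closed balls of radius $2^{-n}$ centred at the central nodes cover each of them. So $N_n\prod_{i<n}N_i(N_i+1)$ is exactly the covering number at $r=2^{-n}$. Your lower bound is this same count. Your upper bound (one ball per level-$k$ cylinder) is cruder but sufficient, provided the balls are closed or have open radius strictly above $2^{-k}$, since $B(x,2^{-k})=\bar{B}(x,2^{-(k+1)})$ does not contain $C_k(x)$.

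The primary route in Step 2 cannot work, because agreement on balls imposes no condition on $N_k$: Davies' parity assignment makes it an identity.
If the parent cylinder has even parity and $x_k$ is central, $\bar{B}(x,2^{-k})$ consists of the $N_k$ central children (mass $\beta_k$ under $\mu_1$, $\alpha_k$ under $\mu_2$) and the $N_k$ peripheral children of $x_k$ (masses swapped). Both measures therefore give $N_k(\alpha_k+\beta_k)$. If $x_k$ is peripheral, the ball is $x_k$ together with its parent, with mass $\alpha_k+\beta_k$ under both. Odd parity is symmetric.

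The only constraints are consistency and positivity, i.e.\ $N_n>R_{n-1}$, and these do not force fast growth. Since $R_n-R_{n-1}=(R_{n-1}^2-1)/(N_n-R_{n-1})$, the choice $N_n=\lceil R_{n-1}+2^n(R_{n-1}^2-1)\rceil$ keeps $R_n\le R_0+1$ and gives $N_n=O(2^n)$. This is a perfectly valid Davies space whose covering number at scale $2^{-k}$ is only $2^{O(k^2)}$, not exponential in $2^k$.

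So the proposition is really a statement about the minimal choice $N_n=\lfloor R_{n-1}\rfloor+1$, which is precisely what the paper's proof assumes. Your ``fallback'' is the only available argument, not a backup. With it, $0<N_n-R_{n-1}\le 1$ gives $R_n\ge R_{n-1}^2+R_{n-1}-1$, and your proof coincides with the paper's.

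Finally, drop the closing remark about \autoref{thm:ballprod}. Take $2^{-k}\le\epsilon$. Then the number of balls of radius at most $\epsilon$, centred in $\pi_n(K)$, needed to cover $\pi_n(K)$ is at most $\prod_{m\le k+1}N_m(N_m+1)$ for every $n$ (one ball per level-$(k+1)$ cylinder). So the plain covering count is bounded in $n$. The proposition makes no uniform-in-$n$ claim, and any failure of condition (ii) there must come from the tolerance $\delta$.
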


    \begin{proof}
    We only need to consider $r=1/2^n$ for $n \in \mb{N}$. Fixing $n$, we observe that the ball $\bar{B}(x,1/2^n)$ cannot contain sequences that differ from $x$ in the first $n-1$ coordinates. This means that the history consists of 
    $
    \prod_{i=1}^{n-1}|E_i|$ possible trajectories. To cover $E_n$ with the least number of balls of radius $1/2^n$ it is clear we should take the $N_n$ central balls.  Hence, $N_n \prod_{i=1}^{n-1} N_i(N_i+1)$ is the covering number for $r=1/2^n$.  Taking then the smallest value for the integer $N_n = \flo{R_{n-1}}+1$, we find that $R_n \sim(R_0)^{2^n} = (R_0)^{1/r}$. Hence, 
    \[
    N(\Omega, 1/2^n) = N_n \prod_{i=1}^{n-1} N_i(N_i+1) \sim c^{2^{n-1}} \prod_{i=1}^{n-1}c^{2^{i}} = c^{\frac{3}{2}\cdot2^n} =c^{1.5/r}.
    \]
\end{proof}

For each $n \in \mb{N}$, $(E_n,\rho_n)$ is by construction a finite metric space, and therefore it has the ball property. However, we cannot find a uniform bound on the projection covering numbers. Before showing this, we recall that the metric $\rho$ takes values in $\{0\} \cup \{1/2^m:m \in \mb{N}_0\}$. Consequently, $B(x,1/2^k)= \bar{B}(x, 1/2^{k+1})$. Let $\bar{M}(K,r,\delta)$ denote the minimal number of closed balls of radius $r$ required to cover $K$  such that the sum of their measures exceeds $\mu(K)$ by at most $\delta$, and observe that $
M_n(K_n,1/2^k, \delta)= \bar{M}_n(K_n, 1/2^{k+1},  \delta)$. 
It is therefore without loss of generality to focus on showing that there exists no uniform bound on the projection covering numbers where the cover is constructed by closed balls. 

Every base cylinder set $C_k(x)$ is compact in $\Omega$, and by  \eqref{eq:balliscylplusfringe}, we have that 
\[
\mu_1(\bar{B}(x, 1/2^k)) = \mu_1(C_k(x))+\mu_1(F_k(x))
\]
since the sets are disjoint. The fringe set at level $k$ is a non-empty union of level $k$ cylinder sets, and because the construction enforces $\alpha_k > \beta_k > 0$, every cylinder set carries strictly positive measure, ensuring that $\mu_1(F_k(x)) =: \Delta_k \ge \beta_k > 0$.  Suppose we try to cover the compact set $C_k(x)$ with balls of radius $r \ge 1/2^{k}$. Then for $\delta <\Delta_k$, 
\[
\mu_1(\bar{B}(x,r)) \ge \mu_1(C_k(x))+\Delta_k > \mu_1(C_k(x))+\delta.
\]
This shows that a tight cover with threshold $\delta$ requires covering the cylinder set with smaller balls of radius $r \le 1/2^{k+1}$. However, balls of radius $r \le 1/2^{k+1}$ can only cover sets $C_l(x)$ with $l \ge k+1$. To cover $C_{k}(x)$ with balls of such radii, we need at least one ball centered at each central node in the graph $G_{k+1}$. By construction, there are $N_{k+1}$ such nodes, implying that 
\[
\bar{M}(C_k(x),1/2^{k+1},\delta) \ge N_{k+1}.
\]
Furthermore as we take the tolerance level $\delta \downarrow 0$, we must descend the branching structure to avoid the measure on fringes at level $k+1$ and beyond, forcing us to consider coverings with radius at most $r\le 1/2^{k+\ell}$, for some $\ell \ge 2$. To cover $C_k(x)$, we then require 
\[
\bar{M}(C_k(x),1/2^{k+\ell},\delta) \ge \prod_{i=1}^{\ell} N_{k+i}.
\]
Now, by the proof of \autoref{prop:coverdavies}, we find that $N_{k+1} \sim (R_0)^{2^{k}}$, which grows doubly exponentially in $k$. Because the metric $d_{X_n}$ exactly restricts $\rho$ to the first $n$ coordinates, covering $C_k(x)$ in $\Omega$ with balls of radius $1/2^{k+\ell}$ is structurally equivalent to covering its projection in $X_{k+\ell}$. That is, 
\[
\bar{M}(C_k(x),1/2^{k+\ell},\delta) = \bar{M}_{k+\ell}(\pi_{k+\ell}(C_k(x)), 1/2^{k+\ell}, \delta).
\]
Thus, as we require tighter covers ($\ell \to \infty$), the  projection covering numbers explode ($\bar{M}_n \to \infty$). Consequently, condition $(ii)$ of \autoref{thm:ballprod} cannot be satisfied. 

\section{Generalizations of the ball property}\label{sec:sec4}

The ball property is important because it guarantees that simple, metric-based functions respecting the intrinsic structure can separate measures. Combined with uniform tightness, it therefore provides a highly tractable framework to verify weak convergence of measures on complex metric spaces. Functional limit theory for the corresponding random fields—obtained by evaluating stochastic processes on these functions—can be derived \cite{vD26}. A natural question arises: if the ball property fails, can we construct minimal, structure-preserving generalizations that still uniquely determine probability measures?

\begin{definition}[Ball property of order $n$]
Let $(S,d_S)$ be a separable metric space and consider the collection  $\textbf{B}_{\cap_n} = \Big\{\bigcap_{i=1}^n B(x_i,r_i): x_i \in S, r_i>0, i =1, \ldots, n\Big\}$. We say that $(S,d_S)$ has the \textit{$n$th order ball property } if for any probability measures $\mu$ and $\nu$ on $(S,\mc{B}(S))$, the condition
\[
\mu(A)=\nu(A)
\]
for all $A \in \textbf{B}_{\cap_n}$ implies that $\mu=\nu$.
\end{definition}

Note that since $n$ is fixed, this is obviously not a $\pi$-system per se. Related to this, we also have a natural generalization of \autoref{prop:lambdasepc}:

\begin{Proposition}\label{prop:lambdasepck}
    On a separable metric space $(S,d_S)$,  $\lambda({\textbf{B}_{\cap_n}})$ is a separating class of sets if and only if $\textbf{B}_{\cap_n}$ is a separating class of sets. 
\end{Proposition}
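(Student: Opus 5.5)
The argument will follow the same template as \autoref{prop:lambdasepc}: one inclusion is trivial and the other is a monotone-class/Dynkin-type argument. Throughout, "separating class" means that two finite Borel measures (probability measures, per the definition of the $n$th order ball property) agreeing on the class must coincide.

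\emph{($\Leftarrow$)} Suppose $\textbf{B}_{\cap_n}$ is separating. Since $\textbf{B}_{\cap_n} \subseteq \lambda(\textbf{B}_{\cap_n})$, any two probability measures $\mu,\nu$ that agree on $\lambda(\textbf{B}_{\cap_n})$ in particular agree on $\textbf{B}_{\cap_n}$. Hence $\mu=\nu$, so $\lambda(\textbf{B}_{\cap_n})$ is separating. This direction uses only monotonicity of the separating property under enlarging the class.

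\emph{($\Rightarrow$)} Suppose $\lambda(\textbf{B}_{\cap_n})$ is separating, and let $\mu,\nu$ be probability measures with $\mu(A)=\nu(A)$ for all $A\in\textbf{B}_{\cap_n}$. Set $\eta=\mu-\nu$, a finite signed Borel measure with $\eta(S)=0$, and consider
\[
\mathcal{L}_\eta=\{A\in\mc{B}(S):\eta(A)=0\}.
\]
I will verify that $\mathcal{L}_\eta$ is a $\lambda$-system:
\begin{itemize}
\item $S\in\mathcal{L}_\eta$, because both $\mu$ and $\nu$ are probability measures.
\item $\mathcal{L}_\eta$ is closed under proper differences $B\setminus A$ for $A\subseteq B$, by finite additivity of $\eta$.
\item $\mathcal{L}_\eta$ is closed under increasing unions, by continuity from below of the finite measures $\mu$ and $\nu$ separately.
\end{itemize}
Since $\textbf{B}_{\cap_n}\subseteq\mathcal{L}_\eta$, minimality of the generated $\lambda$-system gives $\lambda(\textbf{B}_{\cap_n})\subseteq\mathcal{L}_\eta$. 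So $\mu$ and $\nu$ agree on $\lambda(\textbf{B}_{\cap_n})$, and the hypothesis yields $\mu=\nu$. Thus $\textbf{B}_{\cap_n}$ is separating.

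The only delicate point is the normalization: the $S\in\mathcal{L}_\eta$ axiom needs $\mu(S)=\nu(S)$. This is automatic here because both are probability measures. It also holds for general finite measures whenever $S$ belongs to the class, and $S=B(x,r)$ for large $r$ when $S$ is bounded. Notably, no $\pi$-system property of $\textbf{B}_{\cap_n}$ is needed, consistent with the remark that $\textbf{B}_{\cap_n}$ is not itself a $\pi$-system. For $n=1$ the argument specializes to \autoref{prop:lambdasepc}.
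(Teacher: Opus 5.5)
Your proof is correct and matches the paper's approach: the paper gives no separate proof and treats this as the direct analogue of \autoref{prop:lambdasepc}, which rests on the same observation that $\mathcal{L}_\eta$ is a $\lambda$-system, so $\textbf{B}_{\cap_n}\subseteq\mathcal{L}_\eta$ forces $\lambda(\textbf{B}_{\cap_n})\subseteq\mathcal{L}_\eta$, while the converse is monotonicity of the separating property. Your normalization caveat also holds without boundedness: for finite measures agreeing on $\textbf{B}_{\cap_n}$, the balls $B(x,r)\uparrow S$ and continuity from below give $\mu(S)=\nu(S)$ directly.
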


To see that such classes are relevant to consider, we revisit Davies' counterexample and prove the following:
\begin{thm}
    The metric space $(\Omega, \rho)$ constructed in Davies' example has the $2$nd order ball property (with respect to closed balls), that is, all probability measures on it are determined by the class $\bar{\textbf{B}}_{\cap_2}$.
\end{thm}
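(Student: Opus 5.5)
The plan is to show that every basic cylinder $C_k(x)$ lies in $\lambda(\bar{\textbf{B}}_{\cap_2})$, and then to conclude with the $\pi$-$\lambda$ theorem together with (the closed-ball version of) \autoref{prop:lambdasepck}. The basic cylinders $\{C_k(x): k \in \mb{N}, x \in \Omega\}$, together with $\emptyset$, form a $\pi$-system: two such cylinders are either nested or disjoint. This $\pi$-system generates $\mc{B}(\Omega)$. So once it sits inside $\lambda(\bar{\textbf{B}}_{\cap_2})$, any two probability measures agreeing on $\bar{\textbf{B}}_{\cap_2}$ agree on $\lambda(\bar{\textbf{B}}_{\cap_2})$, and hence on $\mc{B}(\Omega)$. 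The whole space is admissible because $\rho \le 1$, so $\Omega = \bar B(x,1) \in \bar{\textbf{B}}_{\cap_2}$.

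The key computation uses the description \eqref{eq:balliscylplusfringe}. For $z$ agreeing with $x$ in the first $k-1$ coordinates,
\[
\bar B(z,1/2^k) = \{y : y_\ell = x_\ell \text{ for } \ell<k,\ y_k \in N[z_k]\},
\]
where $N[v]$ denotes the closed neighbourhood of $v$ in $G_k$. Consequently, for two such centres $z,w$,
\[
\bar B(z,1/2^k)\cap \bar B(w,1/2^k) = \{y : y_\ell=x_\ell,\ \ell<k,\ y_k \in N[z_k]\cap N[w_k]\}.
\]
The problem therefore reduces to the finite graph $G_k$: we need to realise each singleton $\{v\}$, $v \in E_k$, through intersections of two closed neighbourhoods and $\lambda$-operations. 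The graph structure gives the following.
\begin{itemize}
\item Central node. For $v=(i,0)$, pick two distinct peripheral nodes $(i,j)$ and $(i,j')$; this is possible because $N_k > R_{k-1} > 1$ forces $N_k \ge 2$. Then $N[(i,j)] \cap N[(i,j')] = \{(i,0)\}$. Hence a central cylinder $C_k(x)$ with $x_k=(i,0)$ is itself a member of $\bar{\textbf{B}}_{\cap_2}$: take the centres $z,w$ to be $x$ with its $k$th coordinate replaced by $(i,j)$ and by $(i,j')$, respectively.
\item Peripheral node. For $x_k=(i,j)$ we have $N[(i,j)] = \{(i,j),(i,0)\}$. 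Thus $\bar B(x,1/2^k) = C_k(x) \sqcup C_k(x')$, where $x'$ is $x$ with $k$th coordinate $(i,0)$. Since $C_k(x') \subseteq \bar B(x,1/2^k)$, and $\lambda$-systems are closed under proper differences, $C_k(x) = \bar B(x,1/2^k)\setminus C_k(x') \in \lambda(\bar{\textbf{B}}_{\cap_2})$.
\end{itemize}
The case $k=1$ is identical, with an empty prefix.

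The main point requiring care is that no single closed ball, nor any pair of neighbourhoods, isolates a peripheral node. Indeed, every closed neighbourhood containing $(i,j)$ also contains $(i,0)$, so peripheral cylinders are genuinely not in $\bar{\textbf{B}}_{\cap_2}$. This is exactly why the argument passes through the $\lambda$-system, and why the closed-ball analogue of \autoref{prop:lambdasepck} is needed: its proof is the same, since the class of sets on which two finite measures agree is a $\lambda$-system. Beyond this, I will check two things: that cylinders with arbitrary Borel base are finite disjoint unions of basic cylinders (immediate, as each $E_n$ is finite), and that the basic cylinders generate $\mc{B}(\Omega)$ (standard for the product topology). With these in place, the $\pi$-$\lambda$ theorem finishes the proof.
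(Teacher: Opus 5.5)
Your proposal is correct and follows essentially the same route as the paper. Central cylinders are realized exactly as the intersection of the two closed balls of radius $1/2^k$ centred at distinct peripheral children of the central node, and peripheral cylinders are obtained by removing the parent's central cylinder through a proper difference in the $\lambda$-system; the only cosmetic difference is that in the peripheral case you use the single ball $\bar B(x,1/2^k)$, whereas the paper intersects it with the ball centred at the parent node, which yields the same set $C_k(x)\cup C_k(x')$.
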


\begin{proof}
It suffices to show that the collection of cylinder sets $\mc{C}$ satisfies $\mc{C} \subseteq \lambda(\bar{\textbf{B}}_{\cap_2})$. We focus on cylinders of $k$-dimensional base, since $k=1$ is trivial.  Consider a sequence $z \in \Omega$ and assume first that the $k$th coordinate of $z$ satisfies $z_k \in V_C$. Set $c=z_k$. Since $N_k >R_{k-1}>1$ implies $N_k \ge 2$, the central node $c$ possesses at least two distinct peripheral nodes. Pick  two such distinct peripheral nodes $x_k, y_k \in V_P$ sharing $c$ as their parent central node. Construct $x,y \in \Omega$ such that  $x_i = y_i = z_i, 1\le i \le k-1$ and with $k$th coordinates $x_k$ and $y_k$, respectively. Then we can write 
\[
C_k(z_1, \ldots, z_k) = \bar{B}\big(x,\frac{1}{2^k}\big) \cap \bar{B}\big(y,\frac{1}{2^k}\big).
\]
Indeed, to show this note that the intersection of two balls with the same radius can be written as
\begin{align*}
    \bar{B}\big(x,\frac{1}{2^k}\big) \cap \bar{B}\big(y,\frac{1}{2^k}\big) = D_{k-1} \cap (F_1 \cup F_2 \cup F_3\cup F_4),
\end{align*}
where the sets on the right-hand side are respectively given by
\begin{align*}
    D_{k-1}& = \{\omega \in \Omega: \omega_i = x_i=y_i, 1\le i \le k-1 \}
    \\ F_1 & = \{\omega \in \Omega: \omega_k=x_k=y_k\}
    \\F_2 &=\{\omega \in \Omega: \omega_k= x_k, \omega_k \neq y_k \text{ and } \omega_k \sim y_k\}
    \\ F_3 &=\{\omega \in \Omega: \omega_k= y_k, \omega_k \neq x_k \text{ and } \omega_k \sim x_k\}
\\ F_4 &=\{\omega \in \Omega: \omega_k \neq x_k, \omega_k \neq y_k \text{ and } \omega_k \sim x_k, \omega_k \sim y_k\},
\end{align*}
and for the particular sequences $x,y \in \Omega$ constructed above, we have $D_{k-1}  \cap F_1 =D_{k-1} \cap F_2=D_{k-1} \cap F_3= \emptyset $ since $x_k$ and $y_k$ are distinct peripheral nodes, and therefore cannot be connected. In fact we get 
\[
\bar{B}\big(x,\frac{1}{2^k}\big) \cap \bar{B}\big(y,\frac{1}{2^k}\big)= D_{k-1} \cap\{\omega: \omega_k = c\} = C_k(x_1, \ldots, x_{k-1}, c).
\]
Thus,  cylinders of the form $C_k(x)$ where $x_k \in V_C$ lie in $\lambda(\bar{\textbf{B}}_{\cap_2})$. 

Consider again sequences $x,y \in \Omega$ that agree with $z \in \Omega$ on the first $k-1$ but now suppose instead that $z_k \in V_P$, $x_k \in V_C$, and $y_k \in V_P$ with $x_k \sim y_k$ (so $x_k$ is the parent node of $y_k$). Then, we find \[
\bar{B}\big(x,\frac{1}{2^k}\big) \cap \bar{B}\big(y,\frac{1}{2^k}\big) =\{\omega \in \Omega: \omega_i=x_i=y_i,1\le i \le k-1, \omega_k \in \{x_k, y_k\}\} = C_k(x) \cup C_k(y). \tageq \label{eq:ckxy}
\]
In particular, note that $D_{k-1} \cap F_4 = \emptyset$, since there is no distinct node that can connect a central  node and its peripheral node simultaneously. As we already proved that cylinders of the form  $C_k(x) =C_k(x_1, \ldots, x_{k-1}, c)$, where $c \in V_C$, are contained in $\lambda(\bar{\textbf{B}}_{\cap_2})$, we obtain from \eqref{eq:ckxy} and the definition of a lambda-system that
\[
C_k(y) =\Big(\bar{B}\big(x,\frac{1}{2^k}\big) \cap \bar{B}\big(y,\frac{1}{2^k}\big)\Big)\setminus C_k(x) \subseteq \lambda(\bar{\textbf{B}}_{\cap_2}).
\]
Consequently, $\mc{C} \subseteq \lambda(\bar{\textbf{B}}_{\cap_2})$, meaning that the latter contains a separating class and thus $\bar{\textbf{B}}_{\cap_2}$ is separating by \autoref{prop:lambdasepck}.

\end{proof}

However, it is important to mention that this generalization is most promising for infinite tree structures. More generally, when the cylinder sets are not pre-images of singletons under coordinate projections, the analysis becomes significantly more complex. We discuss this in the next section. 

\section{The taxonomy of measure-determination via balls}\label{sec:sec5}

In this article, we have studied the driving structural properties of metric spaces for Borel  probability measures to be determined by balls. We summarize the findings here, and illustrate them with an example. 

First, we showed that a separable metric space must exhibit charged dyadic shattering for the ball property to fail. While topological dyadic shattering captures the space's local combinatorial capacity, it is not sufficient to force failure, as seen by the fact that separable Banach spaces and metric spaces admitting an isometric Hilbertian embedding possess this topological capacity yet preserve measure-determination via balls through their  geometric rigidity. Indeed, Schoenberg's theorem tells us that a metric space $(S,d_S)$ is isometric to a subset of a Hilbert space if and only if $d_S^2$ is conditionally negative-definite. In the general separable Banach case, it is effectively the fact that open convex cones can be written as monotonic unions of expanding open balls around unit vectors. The latter, however, breaks as soon as we restrict ourselves to a bounded subset $V \subset E$ of a separable Banach space $(E,\|\cdot\|)$ that retains this combinatorial capacity, as spatial constraints prevent the translation of centers to infinity. Without the ability to construct separating half-spaces, the restricted collection of balls is no longer guaranteed to tightly cover arbitrary cylinder sets without capturing excess measure. Consequently, the metric outer measure is vulnerable to decoupling from the cylinder outer measure. If this happens, we are able to keep two measures identical on all balls---which are determined by specific finite-dimensional cylinder constraints---while the combinatorial capacity provides the room to change measure in the coordinate tails.

\begin{example}
    Consider the Hilbert cube $V=[0,1]^\mb{N}$ with weighted $\ell^p$ metric 
    \[
    d_{\ell^p}(x,y) =\big( \sum_{k=1}^\infty w_k |x_k-y_k|^p\big)^{\frac{1}{p} \wedge 1}
    \]
    where $\{w_k\}$ is a sequence of positive weights such that $w_k \to 0$ as $k \to \infty$. It is straightforward to verify that $|x-y|^2$ is conditionally negative-definite (c.n.d.) on $\mb{R}$. Using results from Schoenberg \cite{schoenberg1938}, the fractional power $(|x-y|^2)^{p/2}=|x-y|^p$ is also c.n.d.\ for $p \in (0,2]$. Since this property is preserved under summation with positive weights, the function 
    \[
    f(x,y) = \sum_{k=1}^\infty w_k |x_k-y_k|^p
    \] 
    is c.n.d.\ on $V$. By Schoenberg's theorem, this implies that the metric space $(V, \sqrt{f})$ is isometric to a subset of a Hilbert space. Consequently, by the results in \cite{vD26}, $(V, \sqrt{f})$ possesses the ball property. Finally, because $\sqrt{f}$ is a monotonic transformation of $d_{\ell^p}$, both metrics generate the exact same collection of open balls. Therefore, the metric space $(V, d_{\ell^p})$ inherits the ball property for any $p \in (0,2]$. 
   
    However, for $p >2$ we lack this geometric safety net.  Furthermore, because $V$ is bounded, we lack the linear half-space structure of the full Banach space. Without these structural safety nets, the topological dyadic shattering induced by the collection of balls provides the structural flexibility needed in principle to adapt a Davies-style counterexample---constructing a pair of distinct finite Borel  measures that agree on all balls while sustaining a non-zero charge. The same structural vulnerability applies when we take the weighted supremum norm ($p=\infty$).
\end{example}

\section*{Acknowledgements}
The author gratefully acknowledges support by NSF grant DMS-2311338.

\appendix
\section{Auxiliary proofs}
\begin{proof}[Proof of \autoref{lem:atomforce}]
    Let $\mc{U}=\{B \in \textbf{B} : \nu(A \cap B) = 0\}$ be the collection of balls with restricted measure zero. We can write their union as $U = \bigcup_{B \in \mc{U}} B$. By the second countability of separable metric spaces, we can write $U=\bigcup_{i =1}^\infty B_i$ for a countable subcollection $\{B_i\}_{i \in \mb{N}} \subseteq \mc{U}$. By countable subadditivity of the nonnegative finite Borel measure $\nu \vert_A$
    \[
    \nu(A \cap U) \le \sum_{i=1}^\infty \nu(A \cap B_i) =0
    \]
    which implies that $ \nu(A \cap U)=0$. Now consider $U^\complement = S\setminus U$. Since $\nu(A \cap U) =0$ we have $\nu(A \cap U^\complement)=k$. We show that $|U^\complement \cap A| = 1$. Clearly, if $U^\complement  \cap A= \emptyset$ then it has measure $0$ instead of $k$, so there must be at least one point. Suppose therefore that $U^\complement  \cap A$ contains at least two distinct points $x,y$. Since $x\notin U$, no open ball containing $x$ belongs to $\mc{U}$. Hence $\nu\big(A \cap B(x,1/n)\big) \neq 0$ and thus, for every $n \in \mb{N}$, $\nu\big(A \cap B(x,1/n)\big)=k$. 
    By continuity from above
    \[
    \nu(A \cap \{x\})= \nu\big(A \cap \bigcap_{n=1}^\infty B(x,1/n)\big)= \lim_{n \to \infty}\nu\big(A \cap B(x,1/n)\big)=k,
    \]
    and similarly $\nu(A \cap \{y\}) =k$. Since $\{x\} \cap \{y\} = \emptyset$, countable additivity yields
    \[
    \nu(A \cap (\{x\} \cup \{y\})) = 2k,
    \]
    which is a contradiction since $\nu(A)=k$. Hence $U^\complement \cap A$ consists of exactly one point $x_0$. For any Borel set $E \subseteq A$ we can write
    \[
   \nu(E)= \nu(E \cap U\cap A) + \nu(E \cap U^\complement\cap A) = \nu(E \cap U^\complement\cap A)= \nu(E \cap \{x_0\}).
    \]
    Consequently, if $x_0 \notin E$, then $\nu(E)=0$ and if $x_0 \in E$, then $\nu(E)=k$.
\end{proof}

\end{document}